\numberwithin{equation}{section}
\newtheorem{thm}[equation]{Theorem}
\newtheorem{lem}[equation]{Lemma}
\newtheorem{cor}[equation]{Corollary}
\newtheorem{prop}[equation]{Proposition}
\newtheorem{prob}[equation]{Problem}
\theoremstyle{definition}
\newtheorem{dfn}[equation]{Definition}
\newtheorem{exam}[equation]{Example}
\theoremstyle{remark}
\renewcommand{\leq}{\leqslant}
\renewcommand{\geq}{\geqslant}
\newcommand{\abs}[1]{\left\lvert#1\right\rvert}
\newcommand{\pigwedge}{\mathop{\bigwedge}\nolimits}
\newcommand{\Supp}{\mathop{\mathrm{Supp}}\nolimits}
\newcommand{\vol}{\mathop{\mathrm{vol}}\nolimits}
\begin{document}

%タイトル%
\title[]{Remarks on $L^2$-jet extension and extension of 
singular Hermitian metric with semipositive curvature}
\author[]{Tomoyuki Hisamoto}
\address{Graduate School of Mathematical Sciences, The University of Tokyo, 
3-8-1 Komaba Meguro-ku, Tokyo 153-0041, Japan}
\email{hisamoto@ms.u-tokyo.ac.jp}
\subjclass[2000]{Primary~32U05, Secondary~32C25, 32L10, 32A25}
%32U05 Plurisubharmonic functions and generalizations Pluripotential theory
%32L10 Sheaves and cohomology of sections of holomorphic vector bundles, general results [See also
%32A25 Integral representations; canonical kernels
%32F07 Complex Monge-Amp\UTF{00E8}re operator Geometric convexity
%32F05 Plurisubharmonic functions and generalizations Geometric convexity
%32F32 Analytical consequences of geometric convexity (vanishing theorems, etc.)
%32C25 Analytic subsets and submanifolds
\keywords{singular Hermitian metric, extension theorem, Bergman kernel}
\date{}
\maketitle

%アブストラクト%
\begin{abstract}

We give a new variant of $L^2$-extension theorem for the jets of holomorphic sections and discuss the relation between the extension problem of singular Hermitian metrics with semipositive curvature. 

\end{abstract}

%１イントロダクション%　　　　　　　　　　
\section{Introduction}\label{introduction}

In this paper we first give the following new variant of $L^2$-extension theorem for the jets of holomorphic sections. 

\begin{thm}\label{extension} 

  Let $X$ be a smooth projective variety with a fixed K\"{a}hler form $\omega$, and $S \subseteq X$ a smooth closed subvariety. Then there exist constants $N=N(S,  X, \omega)$ such that the following holds; 
Let $L \to X$ be a holomorphic line bundle with a {\em smooth} Hermitian metric $h$ whose curvature current $\Theta_h$ satisfies
   \begin{equation*}
      \Theta_h \geq N\omega. 
   \end{equation*}   
Then for any $m \geq 1$ and any section $f \in H^0(S, L^{\otimes m}) $ with 
   \begin{equation*}
     \int_S \abs{f}_{h^m}dV_{\omega, S}  < + \infty, 
   \end{equation*}
  there exists a section $ F \in H^0(X, L^{\otimes m}) $
  such that $F|_S=f$ and the every other term in $(m-1)$-jet along $S$ vanishes. That is, $J^{m-1}F|_S=f$ holds if one denotes the $(m-1)$-jet of $F$ along $S$ by $J^{m-1}F|_S$. Moreover, there exists a constant $C=C(S, X, h)>0$ such that the $L^2$-estimate 
   \begin{equation*}
      \int_X \abs{F}^2_{h^m}dV_{\omega, X} 
      \leq C^{m^2} \int_S  \abs{f}^2_{h^m}dV_{\omega, S}
   \end{equation*} 
  holds. 
\end{thm}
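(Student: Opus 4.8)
My plan is to reduce the prescribed-jet extension to an iteration of \emph{ordinary} $L^2$-extension from a smooth divisor, carried out on the blow-up of $X$ along $S$, and to track the $m$-dependence of every constant. Let $\pi\colon\hat X\to X$ be the blow-up of $X$ along the smooth center $S$, with exceptional divisor $E=\mathbb P(N_{S/X})$ and projection $p\colon E\to S$; since $S$ is smooth and $X$ is projective, $\hat X$ is again smooth projective and $E$ is a smooth irreducible divisor. I will construct $F$ by induction on the jet order $j=0,1,\dots,m-1$, at each stage killing the $j$-th normal jet while leaving the lower jets untouched. The base case $j=0$ is the ordinary Ohsawa--Takegoshi extension of $f$ from $S\subseteq X$, producing $F_0$ with $F_0|_S=f$ and $\int_X\abs{F_0}^2_{h^m}\,dV_{\omega,X}\le C\int_S\abs{f}^2_{h^m}\,dV_{\omega,S}$ with $C$ uniform.

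The inductive step rests on the dictionary between normal jets along $S$ and restrictions to $E$. A holomorphic section of $L^{\otimes m}$ vanishing to order $\ge j$ along $S$ pulls back to a section of $M_j:=\pi^*L^{\otimes m}\otimes\mathcal O_{\hat X}(-jE)$, and since $\mathcal O_{\hat X}(E)|_E$ is the tautological bundle, the push-forward $p_*(M_j|_E)$ recovers the order-$j$ jet bundle $L^{\otimes m}|_S\otimes S^{j}N^*_{S/X}$. Thus, assuming jets $0,\dots,j-1$ have already been arranged, the remaining order-$j$ jet is some $g_j\in H^0\!\big(S,\,L^{\otimes m}\otimes S^{j}N^*_{S/X}\big)$; I encode it as a section of $M_j|_E$ and apply the Ohsawa--Takegoshi--Manivel theorem for extension from the divisor $E\subset\hat X$ to obtain $U_j\in H^0(\hat X,M_j)$ with $U_j|_E$ the prescribed datum and an $L^2$ bound. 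Then $G_j:=\pi_*U_j\in H^0(X,L^{\otimes m}\otimes\mathcal I_S^{\,j})$ vanishes to order $\ge j$, so subtracting it cancels the $j$-th jet without disturbing the jets of order $<j$; iterating yields $F$ with $J^{m-1}F|_S=f$ and all intermediate terms zero.

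The threshold $N=N(S,X,\omega)$ enters through the positivity needed at each extension, and the point is that one fixed $N$ suffices for all $j$ and $m$. Writing the natural curvature of $M_j$ as $m\,\pi^*\Theta_h+j\,\Theta_{\mathcal O_{\hat X}(-E)}$, the first term is semipositive and bounded below by $mN\,\pi^*\omega$ in the horizontal directions but \emph{degenerates along the fibers} of $E$, while $j\,\Theta_{\mathcal O_{\hat X}(-E)}$ is fiberwise positive but only bounded below horizontally, say by $-jA\,\hat\omega$ with $A=A(S,X,\omega)$. Fixing a metric on $\mathcal O_{\hat X}(-E)$ making $\hat\omega:=\pi^*\omega+\Theta_{\mathcal O_{\hat X}(-E)}$ a Kähler form on $\hat X$, one must verify $m\,\pi^*\Theta_h+j\,\Theta_{\mathcal O_{\hat X}(-E)}\ge\hat\omega$ for all $1\le j\le m-1$; since the positive term carries the factor $mN$ and the negative horizontal contribution carries the factor $j\le m-1$, this holds once $N\ge A+1$, uniformly in $m$ and $j$. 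Balancing the degenerate horizontal positivity of $m\pi^*\Theta_h$ against the fiberwise positivity of $j\,\Theta_{\mathcal O_{\hat X}(-E)}$ with a single $N$ is the first delicate point of the argument.

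Finally, the exponent $m^2$ arises from accumulating the $m$-dependent losses, and this quantitative bookkeeping is what I expect to be the main obstacle. The decisive feature is that the transition functions of $L^{\otimes m}$ are $m$-th powers, so differentiating them to order $j$ to compute the successive obstruction jets brings down factors up to $m^{j}$; by the bounded geometry of the pair $(S\subseteq X,\omega)$ the running jet data obey $\norm{g_j}\le (Cm)^{j}\norm{f}$ with $C=C(S,X,h)$. Combined with the per-stage realization factor from passing between the jet norm (in $S^{j}N^*_{S/X}$, i.e.\ in $\mathcal O_E(1)^{\otimes j}$) and the genuine downstairs $L^2$-norm of $G_j$, which is $\le C^{j}\,j!$, the contribution of each stage is at most $m^{O(m)}$, and summing the $m$ stages keeps the total at $m^{O(m)}$. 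Since $m^{O(m)}=e^{O(m\log m)}\le C^{m^2}$ after enlarging $C$, one obtains the stated estimate $\int_X\abs{F}^2_{h^m}\,dV_{\omega,X}\le C^{m^2}\int_S\abs{f}^2_{h^m}\,dV_{\omega,S}$. I would isolate the two uniformity statements, namely the curvature bound $m\pi^*\Theta_h+j\Theta_{\mathcal O_{\hat X}(-E)}\ge\hat\omega$ and the geometric control $\norm{g_j}\le(Cm)^{j}\norm{f}$, as bounded-geometry lemmas depending only on $(S\subseteq X,\omega)$, and I would flag the naive single-shot alternative---solving one $\bar\partial$-equation with a weight having a pole of order $m-1$ along $S$---as the tempting but treacherous route, since forcing the right-hand side into the corresponding weighted space would require an order-$m$ almost-holomorphic extension of $f$ that the local product structures do not supply.
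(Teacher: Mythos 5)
Your skeleton (induction on the jet order $j$, killing one order of the normal Taylor expansion at a time) is the same as the paper's, but you implement each step by Ohsawa--Takegoshi extension from the exceptional divisor on the blow-up, whereas the paper stays on $X\setminus S$ and solves a $\bar\partial$-equation with the log-pole weight $e^{-(j+p)\psi}$, $\psi\sim\log\sum_i\abs{z_i}^2$, via Ohsawa's modified a priori estimate; your uniformity claim $m\,\pi^*\Theta_h+j\,\Theta_{\mathcal O_{\hat X}(-E)}\geq\hat\omega$ is sound and plays the role of the paper's constant $N(S,X,\omega)$. The genuine gap is in the quantitative bookkeeping, which is precisely the new content of the theorem. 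The per-stage losses do not ``sum'': the order-$j$ obstruction $g_j$ can only be controlled (by Cauchy estimates) through the $L^2$-norm of $F_{j-1}=F_0-G_1-\cdots-G_{j-1}$, which already carries all previous losses, so the recursion has the shape $\norm{G_j}\leq c_j\big(\norm{F_0}+\sum_{k<j}\norm{G_k}\big)$ and the stage factors compound multiplicatively, the total being on the order of $\prod_{j\leq m}c_j$. With your claimed per-stage factors $(Cm)^j$ and $C^j j!$ this product is $e^{O(m^2\log m)}$, which is \emph{not} bounded by $C^{m^2}$ for any fixed $C$. To land on $C^{m^2}$ you must prove $c_j\leq C^{\max(j,m)}$ with $C=C(S,X,h)$ independent of $j$ and $m$; this is what the paper's accounting delivers, namely a factor $1+R^{-2(j+p)}$ per stage (after absorbing the metric ratio $\sup h^m(s,0)/h^m(s,z)\leq(1+\omega_h(R))^m$ into a uniform $C^m$), whence $\prod_{j\leq m}c_j\leq C^{m^2}$. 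Neither the factor $m^j$ you attribute to differentiating transition functions nor the $j!$ from converting between the jet norm and the downstairs $L^2$-norm is harmless here, and neither is justified in your sketch; you would need to show they do not actually occur.

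A second, smaller gap: for $j\geq1$ the ``order-$j$ jet of $F_{j-1}$ relative to $f$'' is not intrinsically defined, because $F_{j-1}|_S=f\neq0$ and the constant-in-$z$ lift of $f$ depends on the choice of adapted coordinates, so two charts disagree already at order $1$ in $z$. To define the obstruction $g_j\in H^0(S,L^{\otimes m}\otimes S^jN^*_{S/X})$ and to make sense of ``every other term of the jet vanishes,'' you need a fixed reference extension of $f$ that is holomorphic to order $m$ in the normal directions; the paper glues a $C^\infty$ extension $\tilde f$ with $\bar\partial\tilde f=O(\abs{z}^m)$ and measures all jets against it. This is exactly the order-$m$ almost-holomorphic extension that your closing remark dismisses as unavailable: your inductive scheme relies on it just as much as the single-shot scheme does, so you cannot discard it without replacing it by an intrinsic formulation of the jet condition.
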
 

Theorem \ref{extension} originates from \cite{OT87}. Compact manifold case was first treated by \cite{Man93} and Ohsawa considered arbitrary closed submanifolds in \cite{Ohs01}. For the jet-extension, there is \cite{Pop05} which generalizes Manivel's result assuming that the subvariety $S$ is defined as a locus of some holomorphic section of a vector bundle. Theorem \ref{extension} not only generalizes \cite{Pop05} to general submanifolds but also specifies how the coefficient in the $L^2$-estimate varies when one twists the line bundle. This is our new viewpoint. On the other hand, in the present paper we only treat with a very special type of jet (every term of $J^{m-1}F$ except the zeroth order term vanishes). 

The formulation of Theorem \ref{extension} is strongly motivated by the extension problem for singular Hermitian metrics with semipositive curvature. Let $L$ be a holomorphic line bundle over a smooth complex projective variety $X$. We fix a closed subvariety $S$ of $X$ and a singular Hermitian metric $h_S$ on $L|_{S}$. We will suggest a new approach to generalize the following result due to Coman, Guedj, and Zeriahi. 

\begin{thm}[smooth subvariety case of Theorem B in \cite{CGZ10}]\label{CGZ}
Let $X$ be a smooth complex projective variety, $L$ an ample line bundle over $X$, and $S\subset X$ a smooth closed subvariety. Then for any singular Hermitian metric $h_S$ with semipositive curvature on $L|_S$ there exists a singular Hermitian metric $h$ on $L$ over $X$ such that $h|_S = h_S$ holds. 
\end{thm}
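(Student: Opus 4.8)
The plan is to construct the extension as a regularized limit of Bergman-type metrics produced by applying Theorem~\ref{extension} to an orthonormal basis on $S$. First I would fix a \emph{smooth} reference metric $h_0$ on $L$ with positive curvature; since $L$ is ample and the constant $N=N(S,X,\omega)$ of Theorem~\ref{extension} is fixed once $\omega$ is chosen, after rescaling (or replacing $L$ by a fixed power throughout, which is harmless for the limiting argument) we may assume $\Theta_{h_0}\geq N\omega$. Writing $h_0=e^{-\varphi_0}$ and $h_S=e^{-\varphi_S}$ in local trivializations, the assumption that $h_S$ has semipositive curvature means $\varphi_S$ is psh, and $\psi_S:=\varphi_S-\varphi_0$ is a globally defined quasi-psh function on $S$, hence bounded above, $\psi_S\leq M$, by compactness of $S$. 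The problem is thereby reduced to producing a quasi-psh weight $\Phi$ on $L$ over $X$ with $\Phi|_S=\varphi_S$, after which $h:=e^{-\Phi}$ is the desired singular Hermitian metric.

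For each $m\geq1$ let $\{s_i^{(m)}\}_i$ be an orthonormal basis of the space of sections of $H^0(S,L^{\otimes m})$ that are $L^2$ with respect to $h_S^m$ and $dV_{\omega,S}$. By Demailly's Bergman-kernel approximation, the psh weights $u_m^S:=\tfrac1{2m}\log\sum_i\abs{s_i^{(m)}}^2$ converge to $\varphi_S$ as $m\to\infty$, and this is exactly where semipositivity of $\Theta_{h_S}$ is used. Next I would apply Theorem~\ref{extension} to each $s_i^{(m)}$, with the smooth metric $h_0$, obtaining $F_i^{(m)}\in H^0(X,L^{\otimes m})$ with the exact jet condition $J^{m-1}F_i^{(m)}|_S=s_i^{(m)}$, in particular $F_i^{(m)}|_S=s_i^{(m)}$, together with $\int_X\abs{F_i^{(m)}}^2_{h_0^m}\,dV_{\omega,X}\leq C^{m^2}\int_S\abs{s_i^{(m)}}^2_{h_0^m}\,dV_{\omega,S}\leq C^{m^2}e^{2mM}$, the last step using $\psi_S\leq M$. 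Forming $u_m^X:=\tfrac1{2m}\log\sum_i\abs{F_i^{(m)}}^2$, each $u_m^X$ is a psh weight on $L$ over $X$, and the value part of the jet condition gives $u_m^X|_S=u_m^S$ \emph{exactly}. The candidate extension is $\Phi:=\bigl(\limsup_{m}u_m^X\bigr)^{*}$, the upper semicontinuous regularization, and ampleness of $L$ guarantees enough sections that $\Phi\not\equiv-\infty$.

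It then remains to prove $\Phi|_S=\varphi_S$. The inequality $\Phi|_S\geq\varphi_S$ is immediate from $u_m^X|_S=u_m^S\to\varphi_S$ together with the fact that regularization only increases values. For the reverse inequality one must control $u_m^X$ in the directions \emph{transverse} to $S$, and this is precisely the role of the higher-order jet vanishing: introducing a normal coordinate $t$ to $S$, the condition $J^{m-1}F_i^{(m)}|_S=s_i^{(m)}$ forces $F_i^{(m)}=s_i^{(m)}+O(t^{m})$, so a Schwarz-lemma estimate combined with the $L^2$ bound yields a pointwise majoration of $\sum_i\abs{F_i^{(m)}}^2$ near $S$ roughly of the form $e^{2m\varphi_0}\,C^{m^2}e^{2mM}\,(\abs t/\rho)^{2m}$ up to polynomial factors. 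Setting $t=0$ kills the transverse term and recovers $u_m^S$, but to conclude $\Phi|_S\leq\varphi_S$ one needs this estimate to remain uniformly controlled near $S$ as $m\to\infty$.

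The main obstacle lies exactly here. The coefficient $C^{m^2}$ supplied by Theorem~\ref{extension} competes with the jet-decay factor $(\abs t/\rho)^{2m}$, and since $C^{m^2}=e^{m^{2}\log C}$ grows faster in $m$ than $e^{2m\log(\abs t/\rho)}$ decays for fixed $t\neq0$, the naive $\limsup$ of $u_m^X$ threatens to blow up in every punctured neighborhood of $S$, so that $\Phi$ need not be locally bounded above near $S$. Overcoming this is the heart of the argument, and is where the sharp balance between the jet order $m-1$ and the power $L^{\otimes m}$, together with the precise quadratic-exponential form of the constant, must be exploited: one would replace the plain $\limsup$ by a suitably reweighted or subsequential limit, or interpolate the exact-restriction property of the jet extension against the ambient positivity furnished by the ordinary ($0$-jet) Ohsawa--Takegoshi extension, so as to trade the $C^{m^2}$ loss for the transverse decay while preserving $u_m^X|_S=u_m^S$. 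Once local boundedness of $\Phi$ near $S$ is secured, $\Phi$ is a genuine psh weight with $\Phi|_S=\varphi_S$, and $h=e^{-\Phi}$ furnishes the required extension.
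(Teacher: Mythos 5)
Your construction is the same one the paper uses in Section 4: Demailly--Bergman approximation of $\varphi_S$ by weights $\frac{1}{m}\log\sum_j\abs{f_{m,j}}^2$ built from an orthonormal basis on $S$, jet extension of each basis element, formation of the ambient weights $\varphi_m=\frac{1}{m}\log\sum_j\abs{F_{m,j}}^2$, the $\soup$-limit, the easy inequality $\varphi|_S\geq\varphi_S$, and a Cauchy-integral/Schwarz-lemma estimate $\abs{F_{m,j}(s,z)-f_{m,j}(s)}\leq C_2^m\varepsilon^m$ exploiting the $(m-1)$-jet vanishing to get the reverse inequality. (Two minor discrepancies: the paper's Proposition \ref{Demailly approximation} takes the orthonormal basis with respect to the twisted norm $e^{-(m-m_0)\varphi_S-m_0\psi}$ rather than the pure $h_S^m$-norm, which is what makes Demailly's two-sided estimate available; and the paper normalizes so that the $S$-integrals are $\leq 1$ via $\varphi_S\leq\psi|_S$ rather than carrying an $e^{2mM}$ factor.)

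However, as a proof of Theorem \ref{CGZ} your proposal has a genuine gap, and it is exactly the gap you flag in your last paragraph: nothing you write actually disposes of the $C^{m^2}$ coefficient. With that coefficient the mean-value bound gives $\sup_X\abs{F^{(m)}_i}^2_{h_0^m}\leq C^{m^2}\cdot(\mathrm{const})^m$, hence $u_m^X\leq\tfrac{1}{2}m\log C+O(1)$, which is not uniformly bounded above; the family $\{u_m^X\}$ is then not locally uniformly bounded, $\bigl(\limsup_m u_m^X\bigr)^{*}$ need not be a psh weight, and the whole limiting argument collapses. The remedies you gesture at do not work as stated: any subsequence inherits the same divergence, and any renormalization $u_m^X-\tfrac12 m\log C$ that restores boundedness destroys the identity $u_m^X|_S=u_m^S$ by the same divergent amount, so the restriction no longer converges to $\varphi_S$. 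This is precisely why the paper proves only the \emph{conditional} statement, Proposition \ref{metric} (``if $C^{m^2}$ can be replaced by $C^m$, then \dots''), explicitly remarks that the constant depends on the modulus of continuity of $h$ so the improvement is not currently available, and takes Theorem \ref{CGZ} itself from \cite{CGZ10}, where it is proved by pluripotential-theoretic extension with growth control rather than by $L^2$ methods. So your argument reproduces the paper's conditional construction, but the step you describe as ``the heart of the argument'' is missing both from your proposal and from the literature this paper relies on; it cannot be waved through.
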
 

Note that \cite{CGZ10} treated arbitrary singular subvariety. A finite family of global sections $\{F_i\}_i \subset H^0(X, L)$ naturally defines a singular Hermitian metric $1/\sum_i \abs{F_i}^{2}$ on $L$ so that Proposition \ref{metric} can be seen as an analytic generalization of the Serre vanishing theorem for ample line bundles. In the local setting there were previous works by \cite{Ric68}, \cite{Sad82} and \cite{Col91}.  Following the approach of \cite{Col91}, \cite{CGZ10} obtained the above result as a consequence of the growth-control extension of plurisubharmonic functions from a closed subvariety in the complex Euclidian space. Their proof is hence rather pluripotential-theoritic. In this paper we study a new approach via $L^2$-extension theorem for holomorphic sections of the line bundle, to give a direct relation between the extendability of sections and that of metrics. Moreover, it enables us to expect a consistent proof in the general big line bundle case. We suggest the following problem. 

\begin{prob}\label{problem}
Let $X$ be a smooth complex projective variety, $L$ a big line bundle over $X$, and $S\subset X$ a smooth closed subvariety. Let us take a singular Hermitian metric $h_S$ with semipositive curvature on $L|_S$, and a singular Hermitian metric $h_0$ with strictly positive curvature  on $L$, so that $h_S \geq h_0|_S$ holds. Then does there exist a singular Hermitian metric $h$ on $L$ over $X$ such that $h|_S = h_S$ holds?. 
\end{prob} 

When $h_0$ is smooth the above problem is obviously reduced to Theorem \ref{CGZ}. For the non-ample line bundle, is not always possible to extend arbitrary singular metrics from $S$. We will discuss this point in subsection $4.3$. We show that Problem \ref{problem} comes down to the further refinement of Theorem \ref{extension}. 

\begin{prop}\label{metric}
If one can replace the $L^2$-coefficient $C^{m^2}$ in Theorem \ref{extension} to $C^{m}$ $(m\geq1)$, then we obtain the $L^2$-theoritic proof of Theorem \ref{CGZ}. 
If further $h$ in Theorem \ref{extension} can be taken singular, the same line solves Problem \ref{problem} affirmatively. 
\end{prop}

We remark that in the proof of Theorem \ref{extension} the constant $C$ in fact depends on the modulus of continuity of $h$ so that the smoothness assumption of $h$ can not be removed so far. 

Let us briefly explain the proof of Proposition \ref{metric}. First of all, one can approximate the metric $h_S$ on $S$ by a sequence of singular metric $h_{S, m}$ defined by holomorphic sections of $L^{\otimes m}(m\geq 1)$. Then we can apply $L^2$-extension theorem to extend each section to $X$ so that they produce a sequence of algebraic singular metric  $h_m$ on $L$ over $X$, which approximates $h_S$ on $S$. Further, one can get a convergent subsequence thanks to the $L^2$-estimate. The limit $h$ is naively to be desired extension of $h_S$. However, we unfortunately have $h|_S \neq h _S$ in general. To make $h_S$ dominate $h_{S, m}$ around $S$, one have to control the jet of the extended holomorphic sections and here we need the formulation of Theorem \ref{extension}.

%拡張定理%
\section{Jet extension}\label{jet extension}

Let us first fix notations. Let $E$ be a holomorphic Hermitian line bundle on a smooth projective variety $X$. 
If a K\"{a}hler metric $\omega$ is fixed, the Chern curvature $\Theta(E)$ defines a Hermitian form on $(\pigwedge^{p.q} T_{X,x}^*) \otimes E_x$ 
 as follows: 
 \begin{equation*}
  \theta ( \alpha , \beta ):= 
    \big( [\Theta(E),\Lambda]\alpha|\beta \big) 
    \ \text{ for } \ \alpha , \beta \in (\pigwedge^{p,q}T_{X,x}^*) \otimes E_x 
    \ \ \ \ \ ( x \in X ) ,
 \end{equation*} 
 where $\Lambda$ denotes the formal adjoint operator of the multiplication by $\omega$. 
 It is known that if $p=n$ and $\Theta(E)$ is semipositive, 
 $\theta$ defines a semipositive Hermitian form. 
 We will use the following norm: 
 \begin{equation*}
   \abs{\alpha}^2_{\theta}
   = \inf 
       \Bigg\{ M \geq 0 \ \Bigg| \begin{matrix} 
                  \ \abs{(\alpha | \beta)}^2 
                  \leq M \cdot \theta(\beta , \beta ) \\ 
                  \ \text{ for any } \ \beta \in (\pigwedge^{n,q} T_{X,x}^*) \otimes E_x\\ 
                                 \end{matrix} \Bigg\}
  \ \in [0,+\infty] 
 \end{equation*}
 for $\alpha \in (\pigwedge^{n,q} T_{X,x}^* ) \otimes E_x$. 

We will obtain Theorem \ref{extension} as a corollary of the following result. 

\begin{thm}\label{L2 extension 1}
 Let $S$ be a $p$-codimensional closed submanifold of a $n$-dimensional projective manifold $X$ with K\"{a}hler form $\omega$. Then there exists a  constant $N=N(S, X,  \omega)>0 $ such that the following holds. 

Fix a positive integer $m\geq1$ , $0 \leq j \leq m-1$ and a holomorphic line bundle $L \to X$ with a smooth Hermitian metric $h$ whose Chern curvature satisfies  
  \begin{equation*}
    \Theta_h \geq N \omega \ \ \ \ \ \text{ on $X$, }
  \end{equation*}
 and $f \in H^0(S,L^{\otimes m})$. 
 Then there exist a constant $C=C(S, X, h)$ and a section $F_j \in H^0(X ,L^{\otimes m})$ such that 
 $J^{j}F_j=f$ and
  \begin{equation*}
         \int_{X} \abs{F_j}_{h^m}^2dV_{\omega, X} 
  \leq C^{(j+1)^2} \int_{S} \abs{f}_{h^m}^2dV_{\omega, S} 
  \end{equation*} 
hold. 
\end{thm}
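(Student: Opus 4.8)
The plan is to deduce Theorem \ref{extension} at once by taking $j=m-1$, and to prove Theorem \ref{L2 extension 1} by the twisted $\bar\partial$-method of Ohsawa--Takegoshi and Demailly, carried out in a tubular neighbourhood of $S$ and then globalized. Two features must be respected throughout: the positivity threshold $N$ has to be independent of both $m$ and $j$, and the only place where $j$ (equivalently $m$) may enter the estimate is in the exponent $(j+1)^2$. Both should ultimately follow from the fact that every quantity scaling with the jet order is of size $O(j)\leq O(m)$, while the available curvature of $h^{\otimes m}$ is $m\,\Theta_h\geq mN\omega$, so a single large $N$ absorbs everything uniformly in $m$. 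The Hermitian form $\theta=[\Theta,\Lambda]$ and the norm $\abs{\cdot}_\theta$ fixed above are exactly the objects appearing in the a priori $L^2$-inequality, which signals that the argument runs through the $\bar\partial$-equation rather than a black-box extension result.

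First I would set up local data: a fixed tubular neighbourhood $U$ of $S$ with a smooth retraction and normal coordinates $w=(w_1,\dots,w_p)$ cutting out $S$, together with a cut-off $\chi$ equal to $1$ near $S$ and supported in $U$. Using the identification $\mathcal{I}_S^{\,k}/\mathcal{I}_S^{\,k+1}\cong\mathrm{Sym}^k N_{S/X}^*$ and a partition of unity on $S$, one builds a global smooth almost-extension $\tilde f$ of $f$ whose normal jet along $S$ equals $f$ in order zero and vanishes in orders $1,\dots,j$, and which is holomorphic to high order along $S$, so that $\bar\partial\tilde f=O(\abs{w}^{\,j+1})$. This $\tilde f$ is the candidate for the ``special'' $j$-jet; the obstruction to holomorphicity is the smooth $(0,1)$-form $\bar\partial\tilde f$ with values in $L^{\otimes m}$, which vanishes to order $j+1$ along $S$.

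The correction is obtained by solving $\bar\partial u=\bar\partial(\chi\tilde f)$ against the weight coming from $h^{\otimes m}$ together with the singular weight $\varphi_j=(j+p)\log\abs{w}^2$, modified away from $S$ to stay smooth. The Demailly--Ohsawa--Takegoshi inequality, written with $\abs{\cdot}_\theta$, produces $u$ with $\int_X\abs{u}^2_{h^m}e^{-\varphi_j}\,dV_\omega\leq\int_X\abs{\bar\partial(\chi\tilde f)}^2_\theta\,e^{-\varphi_j}\,dV_\omega$. The exponent $j+p$ is chosen just large enough that finiteness of the left-hand side forces the holomorphic solution $u$ to vanish to order $j+1$ along $S$, i.e. $J^j u=0$, so that $F_j:=\chi\tilde f-u$ has the prescribed special $j$-jet. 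Since $\log\abs{w}^2$ is plurisubharmonic, $\varphi_j$ itself only helps positivity; the genuinely negative contributions come from smoothing $\varphi_j$ and from the second fundamental form and the curvature of $N_{S/X}$, and these are of size $O(j+p)=O(m)$. They are dominated by $m\,\Theta_h\geq mN\omega$ once $N=N(S,X,\omega)$ exceeds a threshold depending only on the geometry of $S\subset X$, and not on $m$ or $j$; the denominator $[\Theta_{h^m},\Lambda]$ in $\abs{\cdot}_\theta$ grows like $m$, which improves rather than harms the bound.

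The hard part will be turning $\int_X\abs{\bar\partial(\chi\tilde f)}^2_\theta e^{-\varphi_j}$ into $C^{(j+1)^2}\int_S\abs{f}^2$. The leading term of $\bar\partial\tilde f$ is the order-$(j+1)$ normal Taylor coefficient of $\tilde f$, obtained from $f$ by differentiating $j+1$ times through the \emph{non-holomorphic} retraction; these coefficients carry combinatorial factors growing like $C^{\,j}$, and passing between the $\mathrm{Sym}^{j+1}N_{S/X}^*$-valued norm and the base metric costs a further factor of the same order, which the $L^2$-nature of the estimate then squares. The cleanest way to organize this is an induction on $j$: at the step $j-1\to j$ one corrects only the single top-order jet term, a section of $\mathrm{Sym}^j N_{S/X}^*\otimes L^{\otimes m}$ over $S$, and one application of the weighted estimate costs a factor $C^{\,2j+1}$; telescoping $\sum_{k=0}^{j}(2k+1)=(j+1)^2$ then produces the stated exponent, with the base case $j=0$ being the ordinary $L^2$-extension theorem. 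Verifying that each step really costs at most $C^{\,2j+1}$ with $C$ and $N$ uniform in $m$ is the technical heart; the remaining globalization---patching the local construction of $\tilde f$ and transferring the tubular estimate to all of $X$ through $\chi$ and the positivity of $\Theta_h$---is routine.
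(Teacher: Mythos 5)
Your plan coincides with the paper's proof: an induction on $j$ whose base case is the known $L^2$-extension theorem, each step solving a twisted $\bar\partial$-equation against the weight $(j+p)\log\abs{z}^2$ (via Ohsawa's modified estimate, Theorem \ref{Ohsawa L2 estimate}, on the complete K\"ahler manifold $X\setminus S$), with a multiplicative per-step cost $C^{O(j)}$ that telescopes to $C^{(j+1)^2}$. The one place your accounting differs is the source of that per-step cost: in the paper it arises from a Cauchy-integral estimate of the order-$j$ Taylor coefficients of $F_{j-1}$ on polydiscs of radius $R_0=R_0(X,S,h)$, giving the factor $R_0^{-2(j+p)}$ with $R_0$ fixed by the modulus of continuity of $h$ --- which is exactly why $C$ depends on $h$ and the smoothness assumption cannot be dropped --- rather than from combinatorial factors of the retraction or of the $\mathrm{Sym}^{j}N_{S/X}^*$-norm.
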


\begin{proof} The proof is based on an induction on $j$. 
When $j=0$, Theorem \ref{L2 extension 1} is the known result (see \cite{Ohs01}). 
We assume that $F_{j-1}$ was obtained and will construct $F_j$ from $F_{j-1}$ by solving $\bar{\partial}$-equations. The constants $N$ and $C$ will be specified in the induction procedure below. 

Let us first fix a finite system of local coordinates 
\begin{equation*}
 \{s_{\alpha, 1}, \dots, s_{\alpha, n-p}, z_{\alpha, 1}, \dots, z_{\alpha, p}\}_{\alpha}
\end{equation*}
 so that 
  \begin{equation*}
    S \cap U_{\alpha} = \{ z_{\alpha,1}= \cdots =z_{\alpha, p} = 0 \} 
  \end{equation*}
hold. 
 There exists some $\tilde{f} \in C^{\infty}(X, L^{\otimes m})$ such that 
 \begin{equation*}
   \tilde{f}(s, z)= f(s) + O(\abs{z}^m). 
 \end{equation*}
 This is easily seen gluing local $C^{\infty}$-extension by a partition of unity. 
 Fix a smooth cut-off function $\rho: \mathbb{R} \to [0,1]$ satisfying 
 \begin{align*}
   \rho (t) :=  \Bigg\{ \begin{matrix} \ 1 \ \ ( t \leq  \frac{1}{2}) \\
                                       \ 0 \ \ ( t \geq  1 ) \\ 
                    \end{matrix}
        & \ \ \ \ \ \abs{\rho'} \leq 3. 
 \end{align*} 
 Then we set as follows: 
  \begin{align*}
   & G_{\varepsilon}^{(j-1)} := \rho \bigg(\frac{e^{\psi}}{\varepsilon}\bigg)\cdot (\tilde{f}-F_{j-1}) \\
   & g_{\varepsilon} := \bar{\partial}G_{\varepsilon}^{(j-1)} 
       = \underbrace{\bigg(1 + \frac{e^{\psi}}{\varepsilon}\bigg)
                        \rho'\bigg(\frac{e^{\psi}}{\varepsilon}\bigg)
                        \bar{\partial}\psi_{\varepsilon}\wedge (\tilde{f}-F_{j-1})
                        }_{g^{(1)}_{\varepsilon}}
         + \underbrace{\rho \bigg(\frac{e^{\psi}}{\varepsilon}\bigg)\bar{\partial}(\tilde{f}-F_{j-1})
                          }_{g^{(2)}_{\varepsilon}}, 
  \end{align*}
 where 
  \begin{align*}
   & \psi_{\varepsilon} := \log (\varepsilon + e^{\psi})  \ \ \ \ \ 
     \Bigg( \Leftrightarrow  
            1+\frac{e^{\psi}}{\varepsilon} 
           = \frac{e^{\psi_{\varepsilon}}}{\varepsilon}
     \Bigg) \\ 
   & \psi := \log \sum_{\alpha} \chi^2_{\alpha} \sum_{i=1}^{p} \abs{z_{\alpha,i}}^2, 
       \ \ \ \ \ \varepsilon >0. 
  \end{align*}
 Here we choose a smooth function $\chi_{\alpha}$ so that the following hold.  
  \begin{equation*}
    \Supp \chi_{\alpha} \subseteq U_{\alpha}, 
    \ \ \sum_{\alpha} \chi_{\alpha}^2 > 0, 
    \ \ \text{ and } 
    \ \ \sum_{\alpha} \chi_{\alpha}^2 \sum_{i=1}^{p} \abs{z_{\alpha,i}}^2 < e^{-1}  
    \ \ \ \text{ in $X$. }
 \end{equation*} 
 This $\psi$ satisfies the following condition (see \cite{Dem82}, Proposition 1.4). 
 \begin{itemize}
    \setlength{\itemsep}{0pt}
   \item[$(1)$]
     $\psi \in C^{\infty}(X \setminus S)\cap L^1_{\mathrm{loc}}(X)$ \\
     $\psi < -1$ in $X$, $\psi \to -\infty$ around $S$. 
   \item[$(2)$]
     $e^{-p\psi}$ is {\em not} integrable around any point of $S$
   \item[$(3)$]
     There exists a smooth real $(1,1)$-form $\gamma$ in $X$ such that \\
     $\sqrt{-1}\partial\bar{\partial}\psi \geq \gamma$ holds in $X \setminus S$. 
 \end{itemize}
 
 If the equation
  \begin{equation*}
   \begin{cases}
      \bar{\partial}u_{\varepsilon} = \bar{\partial}G_{\varepsilon}^{(j-1)}
      \ \text{ in } \ \Omega \\
      \abs{u_{\varepsilon}}^2 e^{-(j+p)\psi} 
      \ \text{ is locally integrable around } \ S \\ 
   \end{cases}
  \end{equation*}
 has been solved, $u_{\varepsilon} = O(\abs{z}^{(j+1)}) $ along $S$ holds by the above condition
 hence the sequence $ \{G_{\varepsilon}^{(j-1)} - u_{\varepsilon} +F_{j-1}\}_{\varepsilon}$ 
 is expected to converge to desired $F_j$. This is our strategy.   

 To solve $\bar{\partial}$-equations, we quote the following from \cite{Dem00}. 
 \begin{thm}[Ohsawa's modified $L^2$-estimate. \cite{Dem00}, Proposition 3.1]\label{Ohsawa L2 estimate}
  Let $X$ be a complete K\"{a}hler manifold 
  with a K\"{a}hler metric $\omega$ ($\omega$ may not be necessarily complete), 
  $E$ a holomorphic Hermitian line bundle on $X$. 
  Assume that there exist some smooth functions $a,b>0$ and if we set 
   \begin{align*}
    & \Theta'(E) := 
           a \cdot \Theta(E)
             - \sqrt{-1}\partial\bar{\partial}a 
             - \sqrt{-1}b^{-1}\partial a \wedge \bar{\partial}a  \\
    & \theta' ( \alpha , \beta ) := 
               \big( [\Theta'(E),\Lambda]\alpha|\beta \big) 
                 \ \text{ for } 
               \ \alpha, \beta \in (\pigwedge^{n,q}T_{X,x}^*) \otimes E_x 
               \ \ \ \ \ ( x \in X), 
   \end{align*}
  it holds that 
   \begin{equation*}
    \theta' \geq 0 \ \text{ on } \ (\pigwedge^{n,q}T_{X,x}) \otimes E_x 
    \ \ \ \ \ \text{ for any $ x \in X$}. 
   \end{equation*}
Then we have the following. 

 For any $g \in L^2(X, (\pigwedge^{n,q}T_X^*) \otimes E )$ with $\bar{\partial}g=0$ and 
  \begin{equation*}
    \int_X \abs{g}^2_{{\theta}'}dV_{\omega, X} < +\infty, 
  \end{equation*}
 there exists a section $u \in L^2(X, (\pigwedge^{n,q-1}T_X^*) \otimes E)$ 
 with $\bar{\partial}u = g$ such that 
  \begin{equation*}
    \int_X (a+b)^{-1}\abs{u}^2dV_{\omega,X} \leq 2 \int_X \abs{g}^2_{{\theta}'}dV_{\omega,X}.  
  \end{equation*}

\end{thm}

We will apply Theorem \ref{Ohsawa L2 estimate} to $E:=K_X^{-1}\otimes L^{\otimes m}$ and $q=1$. 
 Let us go back to the proof of Theorem \ref{L2 extension 1}. 
 First, we are going to compute 
 \begin{equation}\label{modified Chern curvature}
  \theta_{\varepsilon}' := 
           \big[ a_{\varepsilon}(\Theta(K_X^{-1}\otimes L^{\otimes m}) + (j+p) \sqrt{-1} \partial \bar{\partial} \psi )
                        - \sqrt{-1} \partial\bar{\partial} a_{\varepsilon} 
                        - b_{\varepsilon}^{-1} \sqrt{-1}\partial a_{\varepsilon} \wedge \bar{\partial} a_{\varepsilon}, \Lambda 
           \big]. 
 \end{equation} 
 ($a_{\varepsilon}, b_{\varepsilon}$ will be defined in the following.)
 If we set 
 \begin{equation*}
  a_{\varepsilon} := \chi_{\varepsilon}(\psi_{\varepsilon}) > 0
 \end{equation*}
 for some smooth function $\chi_{\varepsilon}$, it can be computed as: 
 \begin{align*}
  \partial a_{\varepsilon}                        
   & = \chi_{\varepsilon}'(\psi_{\varepsilon}) \partial \psi_{\varepsilon}, \\
  \sqrt{-1} \partial \bar{\partial} a_{\varepsilon} 
   & = \chi_{\varepsilon}'(\psi_{\varepsilon}) \sqrt{-1} \partial \bar{\partial} \psi_{\varepsilon}
      + \chi_{\varepsilon}''(\psi_{\varepsilon}) \sqrt{-1} \partial \psi_{\varepsilon} \wedge \bar{\partial} \psi_{\varepsilon} \\ 
   & = \chi_{\varepsilon}'(\psi_{\varepsilon} ) \sqrt{-1} \partial \bar{\partial} \psi_{\varepsilon}
      + \frac{\chi_{\varepsilon}''(\psi_{\varepsilon})}{\chi_{\varepsilon}'(\psi_{\varepsilon})^2} \sqrt{-1} \partial a_{\varepsilon} \wedge \bar{\partial} a_{\varepsilon} 
 \end{align*} 
 so comparing with (\ref{modified Chern curvature}), it is natural to set 
 \begin{equation*}
  b_{\varepsilon} := 
          - \frac{\chi_{\varepsilon}'(\psi_{\varepsilon})^2}{\chi_{\varepsilon}''(\psi_{\varepsilon})} \ \ (>0). 
 \end{equation*} 
 And we finally define 
 \begin{equation*}
  \chi_{\varepsilon}(t) := \varepsilon -t + \log(1-t). 
 \end{equation*}
 Then for sufficiently small $\varepsilon>0$, we have 
 \begin{align*}
  & a_{\varepsilon} 
    \ \geq \ \varepsilon - \log (\varepsilon + e^{-1}) \ \geq \ 1 \\
  & \sqrt{-1} \partial \bar{\partial} a_{\varepsilon} + b_{\varepsilon}^{-1} \sqrt{-1} \partial a_{\varepsilon} \wedge \bar{\partial} a_{\varepsilon} 
   =\chi_{\varepsilon}'(\psi_{\varepsilon}) \sqrt{-1} \partial \bar{\partial} \psi_{\varepsilon}
    \ \leq \ - \sqrt{-1} \partial \bar{\partial} \psi_{\varepsilon} 
 \end{align*}
 hence 
 \begin{equation*}
  \theta_{\varepsilon}' \ \geq \ \big[ \Theta(K_X^{-1}\otimes L^{\otimes m}) + (j+p)\sqrt{-1} \partial \bar{\partial} \psi + \sqrt{-1} \partial \bar{\partial} \psi_{\varepsilon}, \Lambda \big]. 
 \end{equation*}
 On the other hand, simple computations show:  
 \begin{align*} 
  \partial \psi_{\varepsilon} 
  &= \frac{e^{\psi}}{\varepsilon + e^{\psi}} \partial \psi, \\
  \sqrt{-1}\partial \bar{\partial} \psi_{\varepsilon} 
  &= \frac{e^{\psi}}{\varepsilon + e^{\psi}} \sqrt{-1} \partial \bar{\partial} \psi
   + \frac{e^{\psi}}{\varepsilon + e^{\psi}} \sqrt{-1} \partial \psi \wedge \bar{\partial} \psi 
   - \frac{e^{2\psi}}{(\varepsilon + e^{\psi})^2} \sqrt{-1} \partial \psi \wedge \bar{\partial} \psi \\ 
  &= \frac{e^{\psi}}{\varepsilon + e^{\psi}} \sqrt{-1} \partial \bar{\partial} \psi 
    + \frac{\varepsilon}{e^{\psi}} \sqrt{-1} \partial \psi_{\varepsilon} \wedge \bar{\partial} \psi_{\varepsilon}. 
 \end{align*}
 Therefore, by the compactness of $X$, there exists a constant $N(S, X, \omega)>0$ such that 
 \begin{equation}\label{Nakano positivity}
  \Theta_h \geq N \omega \ \ \ \ \ \text{ on } X
 \end{equation}
 implies 
 \begin{equation}\label{theta is positive}
  \theta_{\varepsilon}' \geq 0 \ \text{ on } \  
    (\pigwedge^{n,1} T_{X,x}^*) \otimes (K_X^{-1}\otimes L^{\otimes m})_x \ \ \ \ \ \text{ for all } x \in X
 \end{equation}
 and eigenvalues of ${\theta}_{\varepsilon}'$ 
 are bounded from below by a positive constant 
 (uniformly with respect to $\varepsilon$) near $S$.

 Next we will estimate $\bar{\partial}\psi_{\varepsilon}$ 
 by $\abs{\cdot}_{{\theta}_{\varepsilon}'}$. 
 Fix arbitrary $\alpha,\beta \in (\pigwedge^{n,1} T_{X,x}^*) \otimes (K_X^{-1}\otimes L^{\otimes m})_x$. 
 By definition, 
 \begin{equation*} 
  \abs{\bar{\partial}\psi_{\varepsilon} \wedge \alpha}^2_{{\theta}_{\varepsilon}'}
  = \inf 
    \Bigg\{ M \geq 0 \ \Bigg| \begin{matrix} 
                               \ \abs{(\bar{\partial}\psi_{\varepsilon}\wedge \alpha | \beta)}^2 
                                 \leq M \cdot \big( [c_{\varepsilon}'(E)\Lambda]\beta|\beta \big) \\ 
                               \ \text{ for any } \ \beta \in (\pigwedge^{n,1} T_{X,x}^*) \otimes E_x\\ 
                                       \end{matrix} \Bigg\} 
 \end{equation*} 
 so it is enough to estimate 
 $\abs{(\bar{\partial}\psi_{\varepsilon}\wedge \alpha | \beta)}^2$. 
 This can be done as follows: 
 \begin{equation*} 
 \begin{split}
  &\abs{(\bar{\partial}\psi_{\varepsilon}\wedge \alpha | \beta)}^2
   = \abs{(\alpha | (\bar{\partial}\psi_{\epsilon})^{\sharp}\beta)}^2 \\
  & \leq \abs{\alpha}^2 \cdot \abs{(\bar{\partial}\psi_{\epsilon})^{\sharp}\beta}^2 
   = \abs{\alpha}^2 \big( ( \bar{\partial}\psi_{\varepsilon})(\bar{\partial}\psi_{\varepsilon})^{\sharp}\beta|\beta \big) 
   = \abs{\alpha}^2 \big( [\sqrt{-1} \partial \psi_{\epsilon} \wedge \bar{\partial} \psi_{\epsilon} , \Lambda ] \beta | \beta \big) 
 \end{split}
 \end{equation*}
 by Shwartz' inequality ($\sharp$ denotes taking the formal adjoint of the multiplication operator), 
 and the last term is bounded by 
 \begin{equation*}
 \begin{split} 
  & \frac{e^{\psi}}{\varepsilon} \abs{\alpha}^2
                    \big( [\sqrt{-1}\partial \bar{\partial} \psi_{\varepsilon}
                       - \frac{e^{\psi}}{\varepsilon+e^{\psi}}\sqrt{-1}\partial \bar{\partial} \psi
                      , \Lambda ] \beta| \beta \big) \\
  & \leq  \frac{e^{\psi}}{\varepsilon} \abs{\alpha}^2
                    \big( [\Theta(K_X^{-1}\otimes L^{\otimes m}) + (j+p)\sqrt{-1} \partial \bar{\partial} \psi 
                           + \sqrt{-1} \partial \bar{\partial} \psi_{\varepsilon}
                           , \Lambda] \beta| \beta \big) \\
  & \leq  \frac{e^{\psi}}{\varepsilon} \abs{\alpha}^2
                    \big( [\Theta'_{\varepsilon}(K_X^{-1}\otimes L^{\otimes m}), \Lambda] \beta| \beta \big). 
 \end{split}
 \end{equation*} 
 Thus we may get a desired estimate  
 \begin{equation}\label{key estimate}
  \abs{\bar{\partial} \psi_{\varepsilon} \wedge \alpha}^2_{{\theta}_{\varepsilon}'}
  \leq \frac{e^{\psi}}{\varepsilon} \abs{\alpha}^2. 
 \end{equation}
 
 This time we estimate 
 $g_{\varepsilon} = g_{\varepsilon}^{(1)} + g_{\varepsilon}^{(2)}$ . 
 By (\ref{key estimate}) and 
 $\Supp g_{\varepsilon}^{(1)} \subseteq \{ e^{\psi} < \varepsilon \}$,  
 $g_{\varepsilon}^{(1)}$ can be estimated. Namely, 
 \begin{equation*}
   \int_{X \setminus S} \abs{g_{\varepsilon}^{(1)}}^2_{{\theta}_{\varepsilon}'}e^{-(j+p)\psi} dV_{\omega, X} 
    \leq 4 \int_{X \setminus S} \abs{\tilde{f}-F_{j-1}}^2_{h^m} \rho'\bigg( \frac{e^{\psi}}{\varepsilon}\bigg)^2e^{-(j+p)\psi} dV_{\omega, X} 
 \end{equation*} 
 holds. Notice $e^{\psi} \sim \sum_{i=1}^{p} \abs{z_{\alpha, i}}^2 $ 
 on $U_{\alpha}$ and $\rho'(t)=0$ for $t<1/2$. By the definition of $F_{j-1}$ we may assume that $U_{\alpha}$ admits a holomorphic local coordinate $z_1, \dots z_p$ in the normal direction along $S$ such that the Taylor expansion of $F_{j-1}$ along $S$ can be written as 
\begin{equation}\label{Taylor}
F_{j-1}(x)=F_{j-1}(s, z) = f(s) + \sum_{k=j}^{\infty}\sum_{i_1+ \dots + i_{p}=k}a_{i_1, \dots i_{p}}(s)z^{i_1}\cdots z^{i_{p}} 
\end{equation}
for any $x=(s, z)\in U_{\alpha}$. The functions $a_{i_1, \dots i_{p}}(s)$ are holomorphic on $s\in S \cap U_{\alpha}$. Therefore by changing variable $z$ to $\varepsilon w$ we get 
 \begin{equation*}
      \limsup_{\varepsilon \to 0} \int_{U_{\alpha} \setminus S} \abs{g_{\varepsilon}^{(1)}}^2_{{\theta}_{\varepsilon}'}e^{-(j+p)\psi} dV_{\omega, X} 
      \leq A_{m} \int_{S \cap U_{\alpha}} \sum_{i_1+\cdots+i_p=j}  \frac{\abs{a_{i_1, \dots, i_p}(s)}^2_{h^m}}{\abs{\bigwedge^p dz}^2} dV_{\omega, S} < +\infty. 
 \end{equation*}
 Here the constant $A_m$ is determined by 
 \begin{equation*}
 \int_{w\in \mathbb{C}^p} {\rho'(\abs{w}^2)}^2 \frac{\bigwedge_{i=1}^p \sqrt{-1}dw_i\wedge d\bar{w_i}}{\abs{w}^{2(j+p)}}. 
 \end{equation*} 
 The point is to estimate the $L^2$-norm of $a_{i_1, \dots, i_p}(s)$. This is done by Cauchy's estimate but here we have to involve the metric $h^m$ so that the constant in the $L^2$-estimate must depends on $h$. The idea here originates from \cite{Pop05} and in our situation we want to see how the coefficients depend on $j$. First we have 
\begin{equation*}
\abs{a_{i_1, \dots, i_p}(s)}^2 \leq  (2\pi)^{-p}R^{-2(j+p)} \int_{\abs{z_i}\leq 2R }\abs{F_{j-1}(s, z)}^2 dz_1\cdots dz_p 
\end{equation*} 
for any small $R>0$. See also subsection $4.2$. 
From this the $L^2$-norm is bounded as follows: 
\begin{equation*}
\int_{S \cap U_{\alpha}}\abs{a_{i_1, \dots, i_p}(s)}^2_{h^m} dV_{\omega, S} \leq (2\pi)^{-p} R^{-2(j+p)} \sup_{\abs{z_i}\leq 2R, s\in U_{\alpha}}\frac{h^m(s, 0)}{h^m(s, z)}\int_X \abs{F_{j-1}}^2_{h^m} d V_{\omega, X}. 
\end{equation*}

Now by the continuity of $h$, There exists a constant $R_0=R_0(X, S, h)$ such that  
\begin{align*}
      \limsup_{\varepsilon \to 0} \int_{U_{\alpha} \setminus S} \abs{g_{\varepsilon}^{(1)}}^2_{{\theta}_{\varepsilon}'}e^{-(j+p)\psi} dV_{\omega, X} 
      \leq  R^{-2(j+p)} \int_X \abs{F_{j-1}}^2_{h^m} d V_{\omega, X} 
\end{align*}
holds for any $R \leq R_0$. Note that $R_0$ is determined by the modulus of continuity of $h$ and independent of $m$ or $j$.  

 We can also estimate $g_{\varepsilon}^{(2)}$.   
 Note that eigenvalues of ${\theta}_{\varepsilon}'$ are bounded below. 
 Then we get  
 \begin{equation*}
   \int_{\Omega \setminus S} \abs{g_{\varepsilon}^{(2)}}^2_{{\theta}_{\varepsilon}'}e^{-(j+p)\psi}dV_{\omega, X} \leq O(\varepsilon) < +\infty
 \end{equation*} 
 because we can see that 
 $\displaystyle \abs{g_{\varepsilon}^{(2)}}^2_{{\theta}_{\varepsilon}'} = O(\abs{z}^m)$ 
 holds in $\Supp g_{\varepsilon}^{(2)} \subseteq \{ e^{\psi} < \varepsilon \} $, 
 by $\bar{\partial}\tilde{f} = O(\abs{z}^{m})$ (using the Taylor expansion).  

 Now we can apply the modified $L^2$-estimate 
 for each $\varepsilon$ in $X \setminus S$. 
 Note that $X \setminus S$ is a complete K\"{a}hler manifold (see \cite{Dem82}, 
 Theorem 1.5). 
 There exists a sequence $ \{ u_{\varepsilon} \} \subseteq L^2(X, L^{\otimes m})$ such that 
 \begin{equation*}
  \int_{X \setminus S} (a_{\varepsilon} + b_{\varepsilon} )^{-1} \abs{u_{\varepsilon}}^2 e^{-(j+p)\psi} dV_{\omega, X} 
  \leq 2 \int_{X \setminus S} \abs{g_{\varepsilon}}^2_{{\theta}_{\varepsilon}'}e^{-(j+p)\psi} dV_{\omega, X} \ < +\infty
 \end{equation*} 
 holds.  

 Let us estimate the left hand side of the inequality. It can be easily seen that 
 \begin{align*}
   & \psi_{\varepsilon} 
     \ \leq \ \log (\varepsilon + e^{-1}) \ \leq -1 \ + O(\varepsilon) \\
   & a_{\varepsilon} 
     \ \leq \ (1+ O(\varepsilon)) \psi^2_{\varepsilon} \\ 
   & b_{\varepsilon} 
     \ = \ (2-\psi_{\varepsilon})^2  
     \ \leq \ (9+ O(\varepsilon)) \psi^2_{\varepsilon} \\
   & a_{\varepsilon} + b_{\varepsilon} 
     \ \leq \ (10 + O(\varepsilon))\psi_{\varepsilon}^2
     \ \leq \ (10 + O(\varepsilon))(-\log(\varepsilon + e^{\psi}))^2 
 \end{align*}
 and 
 \begin{equation*}
  \int_{\Omega} \frac{\abs{G_{\varepsilon}^{(j-1)}}^2}{(\varepsilon+e^{\psi})^{p}(-\log(\varepsilon+e^{\psi}))^2}dV_{\omega, X}
  \leq \frac{M}{(\log \varepsilon)^2} 
 \end{equation*} 
 hold for some constant $M$.  
 Therefore, if we set $F_{\varepsilon} := G_{\varepsilon}^{(j-1)} - u_{\varepsilon}+F_{j-1}$, it follows: 
 \begin{equation*}
 \begin{split}
  & \limsup_{\varepsilon \to 0} \int_{X \setminus S} \frac{\abs{F_{\varepsilon}}^2}{(\varepsilon + e^{\psi})^{p}(-\log(\varepsilon + e^{\psi}))^2}dV_{\omega, X} \\ 
  & \leq  22 \limsup_{\varepsilon \to 0} 
     \int_{X \setminus S} \abs{g_{\varepsilon}}^2_{{\theta}_{\varepsilon}'}e^{-(j+p)\psi}dV_{\omega, X} + \int_X \abs{F_{j-1}}^2_{h^m} dV_{\omega, X} \\
  & \leq (1+R^{-2(j+p)}) \int_X \abs{F_{j-1}}^2_{h^m} dV_{\omega, X}. 
 \end{split}
 \end{equation*}
 By construction, $\bar{\partial}F_{\varepsilon} = 0 $ holds on $X \setminus S$ 
 and in fact also in $X$, thanks to the Riemann extension theorem. 
 
 Finally, Let $\varepsilon \searrow 0$. 
 Then after taking a weakly convergent subsequence, 
 we get a $F_j \in L^2(X, L^{\otimes m})$ 
 such that $\bar{\partial}F_j = 0 $ in $X$ and 
 \begin{equation*}
  \int_X \abs{{F_j}^2}_{h^m}dV_{\omega, X} \leq (1+R^{-2(j+p)}) \int_X \abs{F_{j-1}}^2_{h^m} dV_{\omega, X}. 
 \end{equation*} 
Note that the jet condition $J^{j}F_{\varepsilon}=f$ is preserved under the weak convergence by Lemma \ref{weak L2 convergence implies pointwise convergence} and Cauchy's integral formula. 
By induction we obtain 
 \begin{equation*}
  \int_X \abs{{F_j}^2}_{h^m}dV_{\omega, X} \leq \big(\prod_{k=0}^{j} (1+R^{-2(k+p)})\big) \int_S \abs{f}^2_{h^m} dV_{\omega, S}. 
 \end{equation*} 
\end{proof}

\begin{lem}\label{weak L2 convergence implies pointwise convergence}
  Let $f_k$, $f$ be holomorphic functions defined in a domain 
 $\Omega \subseteq \mathbb{C}^n$. 
  Assume that the sequence $\{f_k\}$  weakly $L^2$-converges to $f$. 
  Then $\{f_k\}$ converges to $f$ pointwise in $\Omega$. 
\end{lem}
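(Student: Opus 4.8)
The plan is to realize evaluation at a point as an honest $L^2$-pairing against a fixed test function, and then to invoke the definition of weak convergence directly. Fix an arbitrary $z_0 \in \Omega$ and choose a radius $r>0$ small enough that the closed ball $\overline{B(z_0,r)}$ is contained in $\Omega$; write $B := B(z_0,r)$. The first step is to recall that a holomorphic function on $\Omega$, viewed as a function of the $2n$ underlying real variables, is harmonic: indeed $\bar{\partial}f_k = 0$ forces $\Delta f_k = 4\sum_j \partial^2 f_k/\partial z_j\,\partial \bar{z}_j = 0$. Consequently each $f_k$ satisfies the solid mean value property over $B$, namely
\begin{equation*}
  f_k(z_0) = \frac{1}{\vol(B)}\int_B f_k\, dV.
\end{equation*}

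Next I would introduce the test function $g := \vol(B)^{-1}\mathbf{1}_B$, where $\mathbf{1}_B$ is the indicator function of $B$. Since $B$ is relatively compact in $\Omega$, the function $g$ is bounded with compact support, hence lies in $L^2(\Omega)$ irrespective of whether $\Omega$ itself is bounded. Because $g$ is real-valued, the mean value identity becomes exactly the inner product
\begin{equation*}
  f_k(z_0) = \int_\Omega f_k\,\overline{g}\, dV = \langle f_k, g\rangle.
\end{equation*}

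Now the definition of weak $L^2$-convergence, applied to this particular $g \in L^2(\Omega)$, gives $\langle f_k, g\rangle \to \langle f, g\rangle$. Since $f$ is itself holomorphic, the very same mean value property yields $\langle f, g\rangle = f(z_0)$, and therefore $f_k(z_0)\to f(z_0)$. As $z_0 \in \Omega$ was arbitrary, this establishes pointwise convergence throughout $\Omega$.

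I expect no serious obstacle here; the only point requiring care is the passage from point evaluation to a genuine $L^2$-pairing, which is precisely what the solid mean value property supplies, and which rests on the elementary fact that holomorphic functions are harmonic in the real sense. One could equally well run the argument with an iterated Cauchy integral over a small polydisc centred at $z_0$, replacing $\mathbf{1}_B$ by the associated kernel; the ball version is preferable only because it keeps the test function manifestly bounded, compactly supported, and real-valued.
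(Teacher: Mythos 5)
Your argument is correct, and it takes a genuinely different route from the paper. The paper realizes point evaluation via the Koppelman formula: writing $f_k(x)=\int_\zeta K^{n,0}_{\mathrm{BM}}(x,\zeta)\wedge\bar{\partial}\chi(\zeta)\wedge f_k(\zeta)$ for a cut-off $\chi$ equal to $1$ near $x$, so that the kernel $K^{n,0}_{\mathrm{BM}}(x,\cdot)\wedge\bar{\partial}\chi$ is supported away from the singularity at $x$, hence bounded with compact support and thus an admissible $L^2$ test object; weak convergence then does the rest. Your proof implements the identical strategy --- turn evaluation at $z_0$ into pairing against a \emph{fixed} element of $L^2(\Omega)$ --- but with a more elementary kernel: since $\bar{\partial}f_k=0$ forces $\Delta f_k=0$ in the $2n$ real variables, the solid mean value property lets you take the normalized indicator $\vol(B)^{-1}\mathbf{1}_B$ as the test function. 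This avoids the Bochner--Martinelli machinery entirely and requires nothing beyond the harmonicity of holomorphic functions; the paper's version is the one that would generalize to situations where one needs a genuine reproducing kernel for $\bar{\partial}$-closed forms rather than functions, but for the lemma as stated your route is cleaner and fully rigorous. The only points needing care --- that the mean value identity applies to complex-valued harmonic functions (componentwise), and that $f$ itself satisfies the same identity so that the limit of the pairings is $f(z_0)$ --- are both handled correctly in your write-up.
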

\begin{proof}
  Fix any point $x \in \Omega$. 
  Taking $\chi \in C^{\infty}_0(\Omega)$ with $\chi \equiv 1$ near $x$, 
  we have: 
  \begin{equation*} 
    f_k(x) = \int_{\zeta \in \Omega} K^{n,0}_{\mathrm{BM}} (x, \zeta) \wedge \bar{\partial} \chi (\zeta) \wedge f_k(\zeta)
           \to \int_{\zeta \in \Omega} K^{n,0}_{\mathrm{BM}} (x, \zeta) \wedge \bar{\partial} \chi (\zeta) \wedge f(\zeta) = f(x) 
  \end{equation*} 
  by the Koppelman formula. 
  Here $K^{p,q}_{\mathrm{BM}}$ denotes the $(p,q)$-part of the Bochner-Martinelli kernel.    
\end{proof}

 %プレリミナリー%
\section{Preliminary to section 4}\label{preliminary to section 4}

\subsection{Psh weight}

Let us briefly review the notion of psh weights. Let $L$ be a holomorphic line bundle over a smooth complex projective variety $X$. We usually fix a family of local trivialization patches $U_{\alpha}$ which cover $X$. A singular Hermitian metric $h$ on $L$ is a family of functions $h_{\alpha}=e^{-\varphi_{\alpha}}$ which are defined on corresponding $U_{\alpha}$ and satisfy the transition rule: $\varphi_{\beta}=\varphi_{\alpha}-\log \abs{g_{\alpha \beta}}^2$ on $U_{\alpha}\cap U_{\beta}$. Here $g_{\alpha \beta}$ are transition functions of $L$. {\em The weight functions} $\varphi_{\alpha}$ are assumed to be $L^1$. If $\varphi_{\alpha}$ are smooth, $\{e^{-\varphi_{\alpha}}\}_{\alpha}$ actually defines a smooth Hermitian metric on $L$. We usual denote the family $\{ \varphi_{\alpha} \}_{\alpha}$ as $\varphi$ and omit the index of local trivializations. Notice that each $\varphi=\varphi_{\alpha}$ is only a local function and not globally defined. But the curvature current $\Theta_h=dd^c \varphi$ is globally defined and is semipositive if and only if each $\varphi$ is plurisubharmonic. Here we denote by $d^c$ the real differential operator $\frac{\partial-\bar{\partial}}{4\pi\sqrt{-1}}$.  We call such a weight {\em psh weight} for short. The most important example is the type of weight $\frac{1}{m}\log (\abs{F_1}^2+\cdots +\abs{F_N}^2)$, defined by some holomorphic sections $F_1 \cdots F_N  \in H^0(X, L^{\otimes m})$. Here $\abs{F_i}^2$ ($1\leq i \leq N$) denotes the absolute value taken for the local trivialization of each $F_i$ on $U_{\alpha}$. We call such weights {\em algebraic}. More generally, a psh weight $\varphi$ said to have a {\em small unbounded locus} if the pluripolar set $\varphi^{-1}(-\infty)$ is contained in some closed  proper algebraic subset $S \subseteq X$. 
A singular Hermitian metric $h=e^{-\varphi}$ is said to have strictly positive curvature if $dd^c \varphi \geq \omega$ holds for some K\"{a}hler form $\omega$. 

By the following theorem due to Demailly, one can approximate a psh weight by a  sequence of algebraic weights. 

\begin{prop}\label{Demailly approximation} 
Let $S$ be a smooth projective variety, $L$ a holomorphic line bundle on $S$, $h_S=e^{-\varphi_S}$ a singular Hermitian metric with semipotitive curvature, and $h_0=e^{-\psi}$ a singular Hermitian metric with strictly positive curvature. 
Then there exist $m_0=m_0(S, \psi)\geq 1$ and $C>0$ such that the following holds: Fix $\{f_{m, j}\}_{j=1}^{N_m} \subseteq H^0(S, L^{\otimes m})$, an orthonormal basis with respect to an $L^2$-norm
\begin{equation*}
\int_S \abs{f}^2_{h_S^{(m-m_0)}h_0^{m_0}} dV_{\omega, S} 
 =\int_S \abs{f}^2 e^{-(m-m_0)\varphi_S-m_0\psi} dV_{\omega, S}. 
 \end{equation*}
 Then psh weights $\varphi_{S, m} := \frac{1}{m}\log(\sum_{j=1}^{N_m} \abs{f_{m, j}}^2)$
satisfies 
\begin{align*}
-\frac{\log C }{m} + \bigg( \frac{m-m_0}{m} \varphi_S(s_0) + &\frac{m_0}{m} \psi(s_0) \bigg) \\& \leq \varphi_{S, m}(s_0)  \leq \frac{\log C_r}{m}  + \sup_{s \in B(s_0 ; r)} \bigg( \frac{m-m_0}{m} \varphi_S(s) + \frac{m_0}{m} \psi(s) \bigg) 
\end{align*}
 for any $s_0 \in S$, small $r>0$, and $m \geq m_0$. 
In particular, $\varphi_{S, m}$ converge to $\varphi_S$. 
 \end{prop}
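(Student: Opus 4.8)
The plan is to interpret $\sum_{j=1}^{N_m}\abs{f_{m,j}}^2$ as the Bergman kernel on the diagonal and to squeeze it between two explicit quantities. Write $\phi_m := (m-m_0)\varphi_S + m_0\psi$ for the psh weight defining the $L^2$-inner product on $H^0(S,L^{\otimes m})$, and let $\norm{\cdot}_m$ denote the associated norm. Since $\{f_{m,j}\}_{j=1}^{N_m}$ is an orthonormal basis, the standard extremal characterization of the Bergman kernel gives
\[
\sum_{j=1}^{N_m}\abs{f_{m,j}(s_0)}^2 = \sup\Big\{ \abs{f(s_0)}^2 : f\in H^0(S,L^{\otimes m}),\ \norm{f}_m \leq 1 \Big\},
\]
so that $\varphi_{S,m}(s_0) = \tfrac1m \log \sup\{\abs{f(s_0)}^2 : \norm{f}_m \leq 1\}$. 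Both inequalities then reduce to estimates on this extremal function, and I would treat the upper and lower bounds separately.

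For the upper bound I would use only the sub-mean value inequality, so no positivity is needed. Fix a trivialization and holomorphic coordinates near $s_0$, and take $f$ with $\norm{f}_m\leq 1$; since $\abs{f}^2$ is plurisubharmonic,
\[
\abs{f(s_0)}^2 \leq \frac{1}{V_r}\int_{B(s_0;r)} \abs{f}^2\, dV_{\omega,S} \leq \frac{1}{V_r}\Big(\sup_{s\in B(s_0;r)} e^{\phi_m(s)}\Big)\int_{B(s_0;r)}\abs{f}^2 e^{-\phi_m}\,dV_{\omega,S},
\]
where the last integral is at most $\norm{f}_m^2\leq 1$. Taking $\tfrac1m\log$ and absorbing $1/V_r$ together with the bounded local comparison of $dV_{\omega,S}$ with Lebesgue measure into a constant $C_r$ yields exactly the claimed upper bound.

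For the lower bound I would invoke the Ohsawa--Takegoshi extension from the single point $s_0$, regarded as a codimension-$n$ submanifold of $S$, carrying the weight $\phi_m$ (equivalently, run the $\bar\partial$-technique of Theorem \ref{Ohsawa L2 estimate} with this weight). This produces $F\in H^0(S,L^{\otimes m})$ with prescribed value at $s_0$ and
\[
\int_S \abs{F}^2 e^{-\phi_m}\,dV_{\omega,S} \leq C\,\abs{F(s_0)}^2 e^{-\phi_m(s_0)};
\]
normalizing $f:=F/\norm{F}_m$ gives $\norm{f}_m=1$ and $\abs{f(s_0)}^2\geq C^{-1}e^{\phi_m(s_0)}$, whence by the extremal characterization $\varphi_{S,m}(s_0)\geq -\tfrac{\log C}{m}+\tfrac1m\phi_m(s_0)$, which is the lower bound. (It suffices to treat points with $\varphi_S(s_0)>-\infty$, the other case being vacuous.) The curvature hypothesis is met because $dd^c\phi_m = (m-m_0)\,dd^c\varphi_S + m_0\,dd^c\psi \geq m_0\,\omega$, using that $h_S$ is semipositive and $h_0$ strictly positive, provided $m_0=m_0(S,\psi)$ is fixed large enough that $m_0\omega$ dominates the curvature of $K_S$ and the logarithmic pole of the point simultaneously.

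The \textbf{main obstacle} is precisely making the constant $C$ in the lower bound uniform in both $m$ and $s_0$: since $\varphi_S$ contributes only semipositivity, \emph{all} the strict positivity needed to run the point-extension must come from the single fixed term $m_0\psi$, so one must verify that the fixed choice $m_0=m_0(S,\psi)$ supplies enough positivity for every $m\geq m_0$ and every $s_0$ (the latter by compactness of $S$). Granting this, the two bounds squeeze $\varphi_{S,m}(s_0)$; letting $m\to\infty$ the convex combination $\tfrac{m-m_0}{m}\varphi_S+\tfrac{m_0}{m}\psi$ tends to $\varphi_S$, the lower bound gives $\liminf_m \varphi_{S,m}(s_0)\geq \varphi_S(s_0)$, and the upper bound together with the upper semicontinuity of $\varphi_S$ (letting $r\to 0$) gives $\limsup_m \varphi_{S,m}(s_0)\leq\varphi_S(s_0)$, establishing the claimed convergence.
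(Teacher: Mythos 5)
Your proposal is correct and is essentially the standard proof of Demailly's approximation theorem (Proposition 3.1 of \cite{Dem92}), which is exactly what the paper invokes when it omits the proof: extremal characterization of the Bergman kernel, sub-mean value inequality for the upper bound, and Ohsawa--Takegoshi extension from a point (with all strict positivity supplied by the fixed term $m_0\psi$) for the lower bound. The only cosmetic point is that $dd^c\phi_m \geq m_0\delta\omega$ with $\delta$ the positivity constant of $\psi$, rather than $m_0\omega$ literally, which does not affect the argument.
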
 

 Proof of the proposition is on the same line as Proposition $3.1$ in \cite{Dem92}. We omit it here. See also section $12$ of \cite{Dem96} for the related topics. 

\subsection{Positivity of line bundle}

We recall the basic concepts of positivity for line bundles. For a recent account of the theory we refer \cite{Laz04}  in the algebraic course and \cite{Dem96} for the analytic treatments. A line bundle $L$ is said to be ample ({\em resp.} semiample, big) if the associated rational map 
\begin{equation*}
\Phi_m : X \dashrightarrow \mathbb{P}(H^0(X, L^{\otimes m}))
\end{equation*}
is a closed embedding ({\em resp.} a holomorphic map, a birational map to the image) for any sufficiently large $m$. The classical result of Kodaira states that $L$ is ample if and only if it admits a smooth strictly positive curvature Hermitian metric $h_0=e^{-\psi}$. Line bundles satisfy the latter condition usually called {\em positive}. This gives an analytic characterization of ample line bundles and indicates a general principle that positivity of a metric produces holomorphic sections of a line bundle. There also exists an analytic characterization of big line bundle due to Demailly. 
\begin{prop}[Demailly]
A line bundle $L$ is big if and only if it admits a singular Hermitian metric with strictly positive curvature. 
\end{prop}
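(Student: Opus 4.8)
The statement is the equivalence: $L$ is big if and only if it carries a singular Hermitian metric $h=e^{-\varphi}$ with $dd^c\varphi\geq\omega$ for some Kähler form $\omega$. By the review of psh weights above, such a metric is precisely the datum of a closed positive $(1,1)$-current $T=\Theta_h=dd^c\varphi$ in the class $c_1(L)$ that dominates a Kähler form, i.e.\ a \emph{Kähler current}. So the plan is to prove separately that $c_1(L)$ contains a Kähler current if and only if $L$ is big, treating the two implications by different tools.

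For the implication ``big $\Rightarrow$ metric'' I would use Kodaira's lemma: since $L$ is big there is an $m_0\geq1$ and a linear equivalence $m_0L\sim A+E$ with $A$ an ample divisor and $E$ effective. Equip $\mathcal{O}(A)$ with a smooth metric $e^{-\psi_A}$ whose curvature is a Kähler form $\omega_0$ (this is exactly Kodaira's analytic characterization of ampleness recalled just above), and equip $\mathcal{O}(E)$ with the canonical singular metric $e^{-\log\abs{s_E}^2}$, where $s_E$ is the defining section, whose curvature current is the integration current $[E]\geq0$. Then the weight $\varphi:=\tfrac{1}{m_0}\big(\psi_A+\log\abs{s_E}^2\big)$ defines a singular metric on $L$ with $dd^c\varphi=\tfrac{1}{m_0}(\omega_0+[E])\geq\tfrac{1}{m_0}\omega_0>0$, which is the required strictly positive curvature metric.

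The implication ``metric $\Rightarrow$ big'' is the substantial one. Write $T=\alpha+dd^c u$, where $\alpha$ is the (smooth) curvature form of a fixed smooth background metric on $L$ and $u$ is a global quasi-psh function; the hypothesis $T\geq\omega$ reads $dd^c u\geq\omega-\alpha$. I would then apply Demailly's regularization theorem to $u$, producing a decreasing sequence $u_k\downarrow u$ with \emph{analytic singularities} and $\alpha+dd^c u_k\geq(1-\varepsilon_k)\omega$ with $\varepsilon_k\downarrow0$, so that for $k$ large $T_k:=\alpha+dd^c u_k\in c_1(L)$ is still a Kähler current ($T_k\geq\tfrac12\omega$), now with analytic singularities. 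Next I would take a log-resolution $\mu:\tilde X\to X$ of the associated ideal sheaf, giving a decomposition $\mu^*T_k=[D]+\beta$ with $D$ an effective $\mathbb{R}$-divisor and $\beta$ a smooth form; since $\mu^*T_k\geq\tfrac12\mu^*\omega$ holds off the support of $D$, continuity forces $\beta\geq\tfrac12\mu^*\omega\geq0$ everywhere. Hence the class $\{\beta\}=c_1(\mu^*L)-\{D\}$ is nef, and
\[
\int_{\tilde X}\beta^n\ \geq\ \int_{\tilde X}\big(\tfrac12\mu^*\omega\big)^n\ =\ \tfrac{1}{2^n}\int_X\omega^n\ >\ 0,
\]
using $0\leq\tfrac12\mu^*\omega\leq\beta$ together with the projection formula for the birational $\mu$. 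A nef class with positive top self-intersection is big, so $\{\beta\}$ is big; adding the pseudoeffective class $\{D\}$ keeps $c_1(\mu^*L)=\{\beta\}+\{D\}$ big, and since $\mu$ is birational one has $h^0(\tilde X,\mu^*L^{\otimes m})=h^0(X,L^{\otimes m})$, whence $L$ is big.

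I expect the main obstacle to be the regularization step: the heart of the argument is that one can trade the genuine positivity of the Kähler current for analytic singularities while losing only $\varepsilon_k\omega$ of curvature, so that the Kähler-current property survives and the divisorial part becomes honestly effective on a resolution. Once that is in place, the nef-and-big computation above is essentially formal; as an alternative to the intersection-theoretic conclusion one may feed $\beta$ directly into the holomorphic Morse inequalities to obtain $h^0(X,L^{\otimes m})\geq\frac{m^n}{n!}\int_{\tilde X}\beta^n-o(m^n)>0$, which exhibits bigness even more explicitly.
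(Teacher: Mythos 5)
Your proof is correct; note that the paper itself offers no proof of this proposition at all --- it is stated as a known result of Demailly and used only as motivation --- so there is no internal argument to compare against. Your two implications are the standard ones: Kodaira's lemma $m_0L \sim A + E$ together with the singular weight $\frac{1}{m_0}(\psi_A + \log\abs{s_E}^2)$ for the easy direction, and Demailly regularization with analytic singularities followed by a log resolution for the converse. Two points are worth tightening. First, the step ``a nef class with positive top self-intersection is big'' is, for arbitrary nef $(1,1)$-classes, the deep Demailly--P\u{a}un theorem, which is much heavier machinery than the statement being proved; but here $\{\beta\} = c_1(\mu^*L) - \{D\}$ is an $\mathbb{R}$-divisor class on a projective manifold, for which the implication is elementary (asymptotic Riemann--Roch after a small rational perturbation), and in any case your alternative via the holomorphic Morse inequalities bypasses it entirely. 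Second, for either of those endgames one needs $D$ to have rational coefficients so that some multiple of $\mu^*L - D$ is an honest line bundle to which Riemann--Roch or the Morse inequalities apply; this is automatic for the Bergman-kernel form of Demailly's regularization, where $u_k = \frac{1}{2k}\log\sum_j\abs{\sigma_j}^2$ produces singularities with coefficient $1/k$, but it deserves a sentence. Neither point is a gap, only a place to be explicit about which version of which theorem you invoke.
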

The proposition allows us to expect some analytic analogue between ample line bundle and big line bundle, and motivates our study for a generalization of a result of \cite{CGZ10} to the big line bundle case.

%3計量の拡張%
\section{Extension of singular metric with semipositive curvature}\label{extension of singular metric with semi positive curvature}

In this section, we prove Proposition \ref{metric}. 

\subsection{Construction}

Fix a $p$-dimensional closed submanifold $S\subset X$ and a singular metric $h_S=e^{-\varphi_S}$ on $L|_S$. Moreover, we fix $h_0=e^{-\psi}$, a singular Hermitian metric on $L$ such that $h_0$ has the strictly positive curvature on $X$ and the inequality 
\begin{equation}\label{condition}
\varphi_S \leq \psi|_S
\end{equation} 
holds on $S$.  Such $h_0$ actually exists by the bigness of $L$. 
Denote a positive integer satisfying the condition in Proposition \ref{Demailly approximation} by $m_0$ and the norm of each vector space $H^0(S, L^{\otimes m})$ $(m \geq m_0)$ by 
\begin{equation*}
 \int_S \abs{f}^2 e^{-(m-m_0)\varphi_S-m_0\psi} dV_{\omega, S}. 
\end{equation*}
First of all, by Proposition \ref{Demailly approximation}, we have a sequence of orthonormal basis $\{f_{m, j}\}_{j=1}^{N_m} \subseteq H^0(S, L^{\otimes m})$
such that 
\begin{equation*}
\frac{1}{m}\log \sum_{j=1}^{N_m} \abs{f_{m, j}}^2 \ \to \ \varphi_S \ \ \ \  \text{on} \ \  S. 
\end{equation*}
Note that $\varphi_S$ is not a priori defined on the ambient space $X$. For this reason, we will apply the $L^2$-extension theorem for $\psi$ and then compare the two norm each of which is defined by $\varphi_S$ and $\psi$. At this stage, the assumption $\varphi_S \leq \psi|_S$ will be crucially used. 
At any rate, we have 
\begin{equation*}
\int_S \abs{f_{m, j}}^2 e^{-m\psi} dV_{S, \omega} < +\infty
\end{equation*}
by $\varphi_S \leq \psi|_S$. If a constant $N=N(S, X, \omega)$ (which appears in the assumption in Theorem \ref{extension}) is given, we may assume $dd^c\psi \geq N\omega$ because we can replace $L$ and $\varphi_S$ by $L^{\otimes N}$ and $N\varphi_S$ to prove Proposition \ref{metric}. 
So applying the jet-extension, there exists $F_{m, j} \in H^0(X, L^{\otimes m})$ such that $F_{m, j}|_S= f_{m, j}$ holds and every other term in $(m-1)$-jet along $S$ vanishes.  That is, for any point $s_0 \in S$ there exists a neighborhood $U$ on $X$ and a holomorphic local coordinate $z_1, \dots z_p$ in the normal direction along $S$, which centers at $s_0$, such that the Taylor expansion of $F_{m, j}$ along $S$ can be written as 
\begin{equation}\label{Taylor}
F_{m, j}(x)=F_{m, j}(s, z) = f_{m, j}(s) + \sum_{k=m}^{\infty}\sum_{i_1+ \dots + i_{p}=k}a_{i_1, \dots i_{p}}(s)z^{i_1}\cdots z^{i_{p}} 
\end{equation}
for any $x=(s, z)\in U$. 
Note that the functions $a_{i_1, \dots i_{p}}(s)$ are holomorphic on $s\in U \cap S$. 
Moreover, if the coefficient of $L^2$-estimate is refined, 
\begin{equation}\label{L2}
\int_X \abs{F_{m, j}}^2 e^{-m\psi} dV_{X, \omega} \leq C_1^m \int_S \abs{f_{m, j}} e^{-m\psi} dV_{S, \omega}
\end{equation}
hold for each $m$ and $j$. Here the constant $C_1=C_1(S, X, \psi)$ does not depend on $m$.  
Let us define 
\begin{equation}\label{definition}
\varphi_m:=\frac{1}{m}\log \sum_{j=1}^{N_m} \abs{F_{m, j}}^2. 
\end{equation}
This $\varphi_m$ actually defines an algebraic singular metric on $L$ over $X$. 
As a consequence of the above $L^2$-estimates one can derive an upper boundedness of $\varphi_m$. Actually by the mean value property of plurisubharmonic function one has 
\begin{align*}
\abs{F_{m, j}(x)}^2 \leq \frac{n!}{\pi^n r^{2n}}\int_{B(x, r)} \abs{F_{m, j}}^2 \ \ \leq \frac{n!}{\pi^n r^{2n}} \sup_{B(x, r)}e^{m\psi} \int_X\abs{F_{m, j}}^2e^{-m\psi}  dV_{X, \omega}
\end{align*}
for any small ball $B(x, r) \subset X$. Then (\ref{L2}) yields 
\begin{equation*}
\abs{F_{m, j}(x)}^2 \leq {C_2}^m \int_S \abs{f_{m, j}}^2 e^{-m\psi} dV_{S, \omega}. 
\end{equation*}
The constant $C_2$ here depends not only $S$ but also $\psi$. Let us exchange $\psi$ for $\varphi_S$ by the inequality 
\begin{equation*}
\int_S \abs{f_{m, j}}^2 e^{-m\psi} dV_{S, \omega} \leq \sup_S e^{(m-m_0)(\varphi_S-\psi)} \int_S \abs{f_{m, j}}^2 e^{-(m-m_0)\varphi_S-m_0\psi} dV_{S, \omega}. 
\end{equation*}
Note that $\varphi_S-\psi$ is well-defined as a global function on $S$ and the integral on the right-hand side equals to $1$ by the normality. So the assumption $\varphi_S \leq \psi|_S$ shows that the right-hand side is not greater than $1$. Summarizing up, we have 
\begin{equation}\label{C_2}
\abs{F_{m, j}(x)}^2 \leq {C_2}^m. 
\end{equation}
Then, by the fact $N_m = O(m^{n-p})$ derived from the Riemann-Roch formula, there exists a constant $C_3$ only depends on $S$ and $\psi$ such that 
\begin{equation}\label{boundedness}
\varphi_m \leq C_3 
\end{equation}
holds. 
This constant $C_3$ is independent of $m$ and therefore we get a subsequence of $\varphi_m$ , which converges to a psh weight $\varphi$. From a general theory of plurisubharmonic function, one may assume 
\begin{equation*}
\varphi(x) = {\limsup_{m \to \infty}}^* \varphi_m(x) := \limsup_{y \to x} \limsup_{m \to \infty} \varphi_m(y). 
\end{equation*}
Here the symbol $*$ denotes the upper-semicontinuous envelop. 
This ends the construction of $\varphi$. By the definition, $\varphi|_S\geq \varphi_S$ is clear. Our remained task is to show the converse inequality. 

\subsection{Upper bound estimate}

At this time we need the control of $(m-1)$-jet of each $F_{m, j}$. For a general convergence of plurisubharmonic functions, a value of the upper-semicontinuous envelop happens to jump at some point. But we may expect that if every $\sum_j F_{m, j}$ sufficiently tangents to $S$, this is not the case. Let us check it in our situation. We fix $s_0\in S$ and its neighborhood $U$ to have (\ref{Taylor}). Note that we may concentrate on each point's neighborhood to get requiring upper bound estimate of $\varphi$, thanks to the compactness of $S$. Take $R>0$ so that for any $1\leq i \leq p$, $\abs{z_i} <3R$ holds on $U$. 

We first estimate $a_{i_1, \dots, i_p}$in (\ref{Taylor}). This is done by Cauchy's integral formula: 
\begin{equation*}
a_{i_1, \dots, i_p}(s) = \bigg(\frac{1}{2\pi \sqrt{-1}}\bigg)^p \int_{\abs{z_p}=r} \cdots \int_{\abs{z_1}=r} \frac{F_{m, j}(s, z)}{z_1^{i_1+1}\cdots z_p^{i_p+1}}dz_1\cdots dz_p. 
\end{equation*}
In fact by Cauchy-Schwarz' inequality one can extract the $L^2$-norm as: 
\begin{align*}
\abs{a_{i_1, \dots, i_p}(s)}^2 
& \leq \bigg( \frac{1}{2\pi}\bigg)^{2p}\bigg( \int_0^{2\pi} \cdots \int_0^{2\pi} \frac{\abs{F_{m, j}(s, z)}}{r^{i_1+\cdots i_p}} d\theta_1 \cdots d\theta_p\bigg)^2 \\
& \leq \bigg( \frac{1}{2\pi}\bigg)^{2p}\bigg( \int_0^{2\pi} \cdots \int_0^{2\pi} \abs{F_{m, j}(s, z)}^2 d\theta_1 \cdots d\theta_p \bigg) \bigg(  \int_0^{2\pi} \cdots \int_0^{2\pi}\frac{d\theta_1 \cdots d\theta_p}{r^{2(i_1+\cdots i_p)}} \bigg) \\
& = \bigg( \frac{1}{2\pi}\bigg)^{p} \frac{1}{r^{2(i_1+\cdots i_p)}} \int_0^{2\pi} \cdots \int_0^{2\pi} \abs{F_{m, j}(s, z)}^2 d\theta_1 \cdots d\theta_p. 
\end{align*}
Integrating the both side from $r_i=R$ to $r_i=2R$ ($1\leq i \leq p$), we get 
\begin{align*}
&\int_R^{2R} \cdots \int_R^{2R} \abs{a_{i_1, \dots, i_p}(s)}^2 dr_1 \cdots dr_p \\
&\ \ \ \ \ \ \ \ \ \ \ \ \leq \bigg( \frac{1}{2\pi} \bigg)^p R^{-2(i_1+\cdots i_p)-p} \int_{R \leq \abs{z_i} \leq 2R} \abs{F_{m, j}(s, z)}^2 dz_1\cdots dz_p. 
\end{align*}  
Thus it holds that 
\begin{equation*}
\abs{a_{i_1, \dots, i_p}(s)} \leq  \bigg( \frac{1}{2\pi} \bigg)^{\frac{p}{2}} \bigg( \int_{\abs{z_i}\leq 2R }\abs{F_{m, j}(s, z)}^2 dz_1\cdots dz_p \bigg)^{\frac{1}{2}}R^{-(i_1+\cdots i_p)-p}.  
\end{equation*}

Take $0<\varepsilon<1/2$.  Then in the ball $\{ \abs{z} \leq p^{-1} \varepsilon R \} \subset \mathbb{C}^p$ (here we fix $s$ and move only $z$) we have the following estimates: 
\begin{align*}
&\abs{F_{m, j}(s, z) -f_{m, j}(s)} 
\leq \sum_{i_1+\cdots i_p \geq m} \abs{a_{i_1, \dots i_p}(s)}\cdot \abs{z}^{i_1+\cdots +i_p} \\
&\leq \bigg( \frac{1}{2\pi} \bigg)^{\frac{p}{2}}\bigg( \int_{\abs{z_i}\leq 2R }\abs{F_{m, j}(s, z)}^2 dz_1\cdots dz_p\bigg)^{\frac{1}{2}} \sum_{i_1+\cdots i_p \geq m} R^{-(i_1+\cdots i_p)-p}  \Big(\frac{\varepsilon R}{p}\Big)^{i_1+\cdots i_p} \\
&\leq \bigg( \frac{1}{2\pi} \bigg)^{\frac{p}{2}}\bigg( \int_{\abs{z_i}\leq 2R }\abs{F_{m, j}(s, z)}^2 dz_1\cdots dz_p\bigg)^{\frac{1}{2}} 2R^{-p} {\varepsilon}^{m}. 
\end{align*}
From (\ref{C_2}), the integral in the last term is bounded by a constant $C_2^m$. Thus we obtain: 
\begin{equation}\label{jet}
\abs{F_{m, j}(s, z)-f_{m, j}(s)} \leq C_2^m \varepsilon^m. 
\end{equation}
Here the constant $C_2$ depends only on $S$ and $\psi$. This is what we need. 

Now let us estimate $\varphi_m$. By the triangle inequality 

\begin{align*}
\varphi_m(s, z)
&= \frac{2}{m}\log \Big(  \sum_{j=1}^{N_m} \abs{F_{m, j}(s, z)}^2 \Big)^{\frac{1}{2}} \\
&\leq \frac{2}{m}\log \bigg[ \Big(\sum_{j=1}^{N_m} \abs{f_{m, j}(s)}^2 \Big)^{\frac{1}{2}} + \Big( \sum_{j=1}^{N_m} \abs{F_{m, j}(s, z)-f_{m, j}(s)}^2 \Big)^{\frac{1}{2}} \bigg]
\end{align*}
hold. Thanks to (\ref{jet}), the second term in the logarithm is bounded from above by the square root of $N_mC_2^m\varepsilon^m$. 
The first term can be bounded from above by Proposition \ref{Demailly approximation}. Thus for any $s\in B(s_0 ; r)$ we have 
\begin{align*}
 \varphi_m(s, z)  \leq \frac{2}{m} \log \bigg[ \big( \sup_{s' \in B(s_0 ; 2r)} C_re^{(m-m_0)\varphi_S(s') + m_0\psi(s')} \big)^{\frac{1}{2}} +(N_mC_2^m\varepsilon^m)^{\frac{1}{2}}\bigg]. 
\end{align*}
By the concavity of the logarithmic function 
\begin{equation*}
\log(a+b) \leq \log a + \frac{b}{a}
\end{equation*}
holds for any $a, b>0$ so it yields 
\begin{align*}
 \varphi_m(s, z) \leq \frac{1}{m}\log C_r &+ \sup_{s' \in B(s_0 ; 2r)} \bigg(\frac{(m-m_0)}{m} \varphi_S(s') + \frac{m_0}{m}\psi(s') \bigg) \\ 
 &+\frac{2}{m}\frac{(N_mC_2^m\varepsilon^m)^{\frac{1}{2}}}{\big(\sup_{s' \in B(s_0 ; 2r)} e^{(m-m_0) \varphi_S(s') +m_0\psi(s')}\big)^\frac{1}{2}}. 
\end{align*}
Note that the denominator of the third term on the right-hand side 
\begin{equation*}
\big(\sup_{s' \in B(s_0 ; 2r)} e^{(m-m_0) \varphi_S(s') +m_0\psi(s')}\big)^\frac{1}{2}
\end{equation*}
is positive but may go to $0$ when $r\to0$. 
However, taking $\varepsilon$ sufficiently small for $r$, we can make the third term smaller than $1/m$. 
Hence letting $m \to \infty$ we get 
\begin{equation*}
\limsup_{(s, z) \to 0} \limsup_{m \to \infty} \varphi_m(s, z) \leq \varphi_S(s_0). 
\end{equation*}
This completes the proof of Proposition \ref{metric}. 

\subsection{Remarks on restricted volumes}\label{Remarks on restricted volumes}

Before concluding the paper, we discuss the assumption which is necessary for generalizing Theorem \ref{CGZ} to big line bundles. For the proof of Proposition \ref{metric}, especially on the ample line bundle case, one can skip this subsection. First we briefly review an analytic representation of the volume of a line bundle along a subvariety. 

\begin{dfn}
Let $S$ be a $p$-codimensional subvariety on a $n$-dimensional smooth projective variety $X$. Denote by $H^0(X|S, L^{\otimes m})$ the image of the restriction map $H^0(X, L^{\otimes m}) \to H^0(S, L^{\otimes m})$. The restricted volume of a line bundle $L$ along $S$ is defined to be 
\begin{equation*}
\vol_{X|S}(L):=\limsup_{m\to\infty}\frac{\dim H^0(X|S, L^{\otimes m})}{m^{n-p}/(n-p)!}. 
\end{equation*}
When $S=X$, we simply write it as $\vol_{X}(L)$. 
\end{dfn}
It is easy to show that if $L$ is ample (more generally semiample), $\vol_{X|S}(L)=\vol_{S}(L|_S)=(S.L^{n-p})$ holds, {\em i.e.} the restricted volume equals to the intersection number. In general restricted volumes are invariant under birational transformations and a line bundle $L$ is big if and only if $\vol_{X}(L)>0$. In some sense they measure the positivity of line bundles along subvarieties. An analytic description of volumes was first given in \cite{His11}. For a singular Hermitian metric $h=e^{-\varphi}$ on $X$, let us denote by $\langle (dd^c\varphi|_S)^{n-p} \rangle$ the non-pluripolar Monge-Amp\`{e}re product of $\varphi|_S$, a positive measure which coincides with usual Monge-Amp\`{e}re product $(dd^c\varphi|_S)^{n-p}$ when $\varphi$ is smooth. For the precise definition, see \cite{BEGZ10}. 
\begin{thm}[Theorem $1.3$ in \cite{His11}]\label{volume}
Assume there exists a singular Hermitian metric $h_0=e^{-\psi}$ with strictly positive curvature on $L$, and $S \subsetneq \psi^{-1}(-\infty)$ hold. Then it holds that 
\begin{equation}
 \vol_{X|S}(L)= \max_{\varphi} \int_S \langle (dd^c\varphi|_S)^{n-p} \rangle, 
\end{equation}
where $e^{-\varphi}$ runs through all the singular Hermitian metric on $L$ over $X$. 
\end{thm}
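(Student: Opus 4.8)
The plan is to establish the two inequalities separately, after first reducing the right-hand side to a single distinguished weight. I would fix the reference $\psi$ and introduce the equilibrium weight $\varphi_{\mathrm{eq}}$, namely the weight with minimal singularities among all psh weights on $L$ (the upper-semicontinuous envelope of the family of all such weights bounded above by a reference). The hypothesis that $S$ is not contained in the unbounded locus, so that $\psi|_S\not\equiv-\infty$, guarantees that the restriction $\varphi_{\mathrm{eq}}|_S$ carries a well-defined non-pluripolar Monge--Amp\`ere measure on the $(n-p)$-dimensional manifold $S$. Every competitor $\varphi$ satisfies $\varphi\leq\varphi_{\mathrm{eq}}+O(1)$, so the monotonicity of non-pluripolar masses under increase of singularity (Theorem~1.16 of \cite{BEGZ10}, applied on $S$ to $\varphi|_S\leq\varphi_{\mathrm{eq}}|_S+O(1)$) yields
\[
\int_S \langle (dd^c\varphi|_S)^{n-p}\rangle \leq \int_S \langle (dd^c\varphi_{\mathrm{eq}}|_S)^{n-p}\rangle .
\]
Hence the maximum, once shown to be attained, is attained at $\varphi_{\mathrm{eq}}$, and the whole problem reduces to identifying this last mass with $\vol_{X|S}(L)$.

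For the identification I would use a Fubini--Study approximation adapted to the \emph{restricted} linear series. Choose sections $\{F_{m,j}\}_{j=1}^{N_m}\subseteq H^0(X,L^{\otimes m})$ whose restrictions $f_{m,j}=F_{m,j}|_S$ form a basis of the image $H^0(X|S,L^{\otimes m})$, and set $\varphi_m:=\frac{1}{m}\log\sum_j\abs{F_{m,j}}^2$. On $S$ the weight $\varphi_m|_S$ is the pullback of the Fubini--Study weight under the map $\Phi_m$ defined by the $f_{m,j}$, so that $\int_S(dd^c\varphi_m|_S)^{n-p}=\frac{1}{m^{n-p}}\int_S\langle(\Phi_m^*\omega_{FS})^{n-p}\rangle$ is the moving restricted self-intersection of $|L^{\otimes m}|$ along $S$, which by the algebraic theory of restricted volumes converges to $\vol_{X|S}(L)$. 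In parallel, the convergence theory for partial Bergman weights (Berman--Boucksom) gives $\varphi_m\to\varphi_{\mathrm{eq}}$, and the decisive point is to promote this to convergence of the restricted Monge--Amp\`ere masses
\[
\int_S (dd^c\varphi_m|_S)^{n-p} \longrightarrow \int_S \langle (dd^c\varphi_{\mathrm{eq}}|_S)^{n-p}\rangle ,
\]
which then forces the desired equality and simultaneously shows that the supremum is a genuine maximum.

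The role of the $L^2$-machinery enters precisely in this last convergence. Since $\vol_{X|S}$ counts sections coming from the ambient $X$, the relevant partial Bergman kernel on $S$ is that of the \emph{extendable} subspace $H^0(X|S,L^{\otimes m})\subseteq H^0(S,L^{\otimes m})$, and controlling its asymptotics requires comparing the $L^2$-norm $\int_S\abs{f}^2e^{-m\psi}\,dV_{\omega,S}$ with the norm of an extension on $X$. This is exactly an Ohsawa--Takegoshi-type estimate (cf. \cite{OT87}) with constants uniform in $m$, whose hypotheses are ensured by $\psi|_S\not\equiv-\infty$. I would combine this uniform extension bound with the energy-at-equilibrium convergence to control the kernel on the bulk of $S$ and to relate $N_m$ to the equilibrium mass.

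The main obstacle I expect is this convergence of non-pluripolar Monge--Amp\`ere masses under restriction to $S$: the operator $\varphi\mapsto\langle(dd^c\varphi|_S)^{n-p}\rangle$ is not continuous under weak convergence of potentials, and the approximation $\varphi_m\to\varphi_{\mathrm{eq}}$ is not monotone. Making this rigorous forces one to invoke the capacity and comparison estimates of \cite{BEGZ10} together with the uniform $L^2$-extension bound, in order to rule out any escape or concentration of Monge--Amp\`ere mass along $\psi^{-1}(-\infty)\cap S$ or along the stable base locus, so that the algebraic limit $\vol_{X|S}(L)$ and the analytic limit $\int_S\langle(dd^c\varphi_{\mathrm{eq}}|_S)^{n-p}\rangle$ coincide.
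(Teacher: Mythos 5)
First, a remark on the ground truth: the paper does not prove this statement. Theorem \ref{volume} is imported verbatim from Theorem 1.3 of \cite{His11}, so there is no internal proof to compare against; your proposal can only be judged against the argument of that reference. On that basis, your architecture is the right one and essentially matches \cite{His11}: reduce the right-hand side to the minimal-singularity weight $\varphi_{\mathrm{eq}}$ via the monotonicity of non-pluripolar masses from \cite{BEGZ10} (this step is sound, since any competitor satisfies $\varphi\leq\varphi_{\mathrm{eq}}+O(1)$ on $X$, hence on $S$, and the hypothesis that $S$ is not contained in $\psi^{-1}(-\infty)$ guarantees $\varphi_{\mathrm{eq}}|_S\not\equiv-\infty$), then approximate by partial Bergman weights built from the extendable subspace $H^0(X|S,L^{\otimes m})$ and use a uniform Ohsawa--Takegoshi bound to control the restricted Bergman kernel.

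Nevertheless there is a genuine gap, and it sits exactly where you flag it: the convergence $\int_S(dd^c\varphi_m|_S)^{n-p}\to\int_S\langle(dd^c\varphi_{\mathrm{eq}}|_S)^{n-p}\rangle$ is not a technical loose end to be absorbed into ``capacity and comparison estimates'' --- it \emph{is} the theorem. Non-pluripolar Monge--Amp\`ere masses are not continuous under weak convergence of potentials, the sequence $\varphi_m$ is not monotone, and mass can a priori concentrate on $\psi^{-1}(-\infty)\cap S$ or on the base loci of the restricted series; the proof in \cite{His11} avoids weak convergence altogether by establishing pointwise asymptotics of the restricted Bergman kernel (a local Morse-type upper bound for one inequality, and an extension-theoretic lower bound on the equilibrium set for the other), none of which you have supplied. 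Two further inputs need to be made honest: the identification of $m^{-(n-p)}\int_S(dd^c\varphi_m|_S)^{n-p}$ with a moving restricted self-intersection converging to $\vol_{X|S}(L)$ is the generalized Fujita approximation for restricted volumes (Ein--Lazarsfeld--Musta\c{t}\u{a}--Nakamaye--Popa), a nontrivial theorem you invoke only as ``the algebraic theory''; and you should verify that the analytic singularities of $\varphi_m|_S$ along the restricted base locus do not contribute to the non-pluripolar mass you are computing. As written, the proposal is a correct and well-oriented plan, but not a proof.
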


%It is also known that the less singular metric has more large Monge-Amp\`{e}re mass. Recall that given two psh weights $\varphi$ and $\varphi'$ on $L$, $\varphi$ is said to be less singular than $\varphi'$ if there exists a constant $C > 0$ such that $\varphi' \leq \varphi + C$ in $X$. 
%\begin{thm}[\cite{BEGZ10}, Theorem 1.16.]\label{comparison theorem} 
%If $\varphi, \varphi'$ are psh weights 
%with small unbounded loci such that
%$\varphi$ is less singular than $\varphi'$, then 
%\begin{equation*}
%\int_X (dd^c \varphi')^n
%\leq 
%\int_X (dd^c \varphi)^n
%\end{equation*}
%holds. 
%\end{thm}
We can relate the analytic study of volumes to the problem of metric extension. 
\begin{cor}
In general $\vol_{X|S}(L) \leq \vol_S(L|_S)$ and if every semipositive curvature singular metric on $L|_S$ is extendable $\vol_{X|S}(L) = \vol_S(L|_S)$ holds. 
\end{cor}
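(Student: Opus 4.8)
The plan is to separate the two assertions: the inequality holds unconditionally and is essentially tautological, while the equality is where the extension hypothesis enters through the analytic volume formula of Theorem \ref{volume}. Throughout I work under the standing assumption (implicit in this subsection) that $L$ is big, so that Theorem \ref{volume} is available.

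For the inequality I would argue purely linear-algebraically. By definition $H^0(X|S, L^{\otimes m})$ is the image of the restriction map $H^0(X, L^{\otimes m}) \to H^0(S, L^{\otimes m})$, hence a linear subspace of $H^0(S, L^{\otimes m})$. Therefore $\dim H^0(X|S, L^{\otimes m}) \leq \dim H^0(S, L^{\otimes m})$ for every $m$; dividing by $m^{n-p}/(n-p)!$ and passing to the $\limsup$ yields $\vol_{X|S}(L) \leq \vol_S(L|_S)$. No positivity or extension is needed here.

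For the equality I would first dispose of the degenerate case: if $L|_S$ is not big then $\vol_S(L|_S)=0$, and the inequality just established forces $\vol_{X|S}(L)=0$ as well, so equality is automatic. Thus I may assume $L|_S$ is big, i.e.\ $L|_S$ carries a singular Hermitian metric with strictly positive curvature on $S$. Under this assumption Theorem \ref{volume} applies both to the pair $(X,S)$ and, with $S$ taken as its own ambient variety (so that $\vol_{S|S}(L|_S)=\vol_S(L|_S)$), to the pair $(S,S)$, giving
\begin{equation*}
\vol_{X|S}(L)=\max_{\varphi}\int_S \langle (dd^c\varphi|_S)^{n-p}\rangle,
\qquad
\vol_S(L|_S)=\max_{\varphi_S}\int_S \langle (dd^c\varphi_S)^{n-p}\rangle,
\end{equation*}
where $e^{-\varphi}$ ranges over singular Hermitian metrics with semipositive curvature on $L$ over $X$ and $e^{-\varphi_S}$ over such metrics on $L|_S$ over $S$. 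I would then transfer the optimizer from $S$ to $X$: choose $e^{-\varphi_S}$ realizing the second maximum, and use the extendability hypothesis to produce a semipositive-curvature singular metric $e^{-\varphi}$ on $L$ over $X$ with $\varphi|_S=\varphi_S$. Since $\varphi$ is then admissible in the first maximization and the non-pluripolar Monge--Amp\`ere product on $S$ depends only on the restricted weight, I obtain
\begin{equation*}
\vol_{X|S}(L) \geq \int_S \langle (dd^c\varphi|_S)^{n-p}\rangle = \int_S \langle (dd^c\varphi_S)^{n-p}\rangle = \vol_S(L|_S),
\end{equation*}
which combined with the first inequality gives $\vol_{X|S}(L)=\vol_S(L|_S)$.

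The main obstacle, and the point I would be most careful about, is that the extended metric $e^{-\varphi}$ must itself have semipositive curvature to be a legitimate competitor in Theorem \ref{volume}; consequently ``extendable'' has to be read as extension within the class of semipositive-curvature singular metrics. This reading is consistent with the rest of the paper, since the extensions produced in Section \ref{extension of singular metric with semi positive curvature} and in \cite{CGZ10} arise as upper-semicontinuous limits of algebraic psh weights and hence are automatically psh. A secondary technical check is the compatibility of the non-pluripolar Monge--Amp\`ere product with restriction, namely that $\langle (dd^c\varphi|_S)^{n-p}\rangle$ is computed from $\varphi|_S=\varphi_S$ alone; this is built into the convention used in Theorem \ref{volume}, and I would simply invoke it rather than re-prove it.
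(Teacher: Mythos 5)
The paper states this corollary without proof, so there is no official argument to compare against; your proof is correct and is clearly the intended one, deducing the equality from Theorem \ref{volume} by transferring the maximizing weight from $S$ to $X$ via the extension hypothesis, and the inequality from the elementary containment $H^0(X|S,L^{\otimes m})\subseteq H^0(S,L^{\otimes m})$. The only point worth making explicit is that the equality direction applies Theorem \ref{volume} to the pair $(X,S)$ and hence tacitly inherits its hypothesis that some strictly positive curvature metric $e^{-\psi}$ on $L$ satisfies $S \not\subseteq \psi^{-1}(-\infty)$ (bigness of $L$ alone does not guarantee this); your handling of the degenerate case where $L|_S$ fails to be big, and your insistence that the extended metric remain in the semipositive-curvature class so as to be an admissible competitor, are both correct and necessary.
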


\begin{exam}
Let $\mu: X \to \mathbb{P}^2$ be the one point blow up and $E$ an exceptional divisor. We take $L=\mu^*\mathcal{O}(1)\otimes \mathcal{O}(E)$ and $S$ to be the strict transform of a line which through the blown up point. Then it is easy to see that this example satisfies the assumption of Theorem \ref{volume} but $\vol_{X|S}(L)=1$ and $\vol_S(L|_S)=2$. 
\end{exam}
The next example shows that there exists a non-extendable metric even if we assume $L$ is semiample and big. 
\begin{exam}\label{semiample}
Let $\mu: X \to \mathbb{P}^2$ be the one point blow up and $E$ an exceptional divisor as the above. Take $L=\mu^*\mathcal{O}(1)$ and $S$ to be the smooth strict transform of a nodal curve. Then $L$ has a singular metric with semipositive curvature, which can not be extended to $X$. In fact one can construct a semipositive curvature metric $h_S=e^{-\varphi_S}$ weight on the ample line bundle $L|_S$, which is $+\infty$ on a point in $E \cap S$ and smooth on another. If a psh  extension $\varphi_S$ exists, it is constant along $E$ since $L|_E$ is trivial. This contradicts the definition of $h_S$. 
\end{exam}

These examples naturally indicate to us the assumption in Problem\ref{problem}. \\

%\subsection{singular case}
%We can extend Theorem \ref{metric} to the case of singular subvariety. 

{\bf Acknowledgments.}
The author would like to express his gratitude 
to his advisor Professor Shigeharu Takayama for his warm encouragements, 
many helpful suggestions and reading the drafts. 
The author specially wishes to thank Professor Dan Coman for his pointing out a critical mistake in the first version of the preprint. 
The author wishes to thank Professor Vincent Guedj and Professor Ahmed Zeriahi for several helpful comments concerning the approach of $L^2$-extension, during the conference {\em Complex and Riemannian Geometry} held at CIRM, Luminy. 
The author also would like to thank Shin-ichi Matsumura 
for several helpful comments, especially on Example \ref{semiample}. 
This research is supported by JSPS Research Fellowships for Young Scientists (22-6742). 
 
%リファレンス%

\end{document}